\tikzstyle{decision} = [diamond, draw,
\tikzstyle{block} = [rectangle, draw,
\tikzstyle{line} = [draw, -latex']
\tikzstyle{cloud} = [draw, ellipse, node distance=3cm,
\algnewcommand\algorithmicinput{\textbf{INPUT:}}
\algnewcommand\INPUT{\item[\algorithmicinput]}
\algnewcommand\algorithmicoutput{\textbf{OUTPUT:}}
\algnewcommand\OUTPUT{\item[\algorithmicoutput]}
\numberwithin{equation}{section}
\theoremstyle{plain}
\newtheorem{Thm}{Theorem}[section]
\newtheorem{Lem}[Thm]{Lemma}
\newtheorem{Prop}[Thm]{Proposition}
\newtheorem{Cor}[Thm]{Corollary}
\theoremstyle{definition}
\newtheorem{defi}[Thm]{Definition}
\newtheorem{Rk}[Thm]{Remark}
\newtheorem{example}[Thm]{Example}
\newtheorem{Constr}[Thm]{Construction}
\newtheorem{Obs}[Thm]{Observation}
\theoremstyle{remark}
\newtheorem*{acknowledgements}{Acknowledgements}
\DeclareMathOperator{\inv}{inv}
\DeclareMathOperator{\ord}{ord}
\DeclareMathOperator{\maxord}{max-ord}
\DeclareMathOperator{\Maxord}{Max-ord}
\DeclareMathOperator{\Bl}{Bl}
\DeclareMathOperator{\Sing}{Sing}
\DeclareMathOperator{\Spec}{Spec}
\newcommand{\fa}{\mathfrak{a}}
\newcommand{\fb}{\mathfrak{b}}
\newcommand{\IA}{\mathbb{A}}
\newcommand{\IQ}{\mathbb{Q}}
\newcommand{\IZ}{\mathbb{Z}}
\newcommand{\ux}{\underline{x}}
\newcommand{\cc}{\rho}
\newcommand{\lex}{{\mathrm{lex}}}
\title[Algorithmic local monomialization of binomials]{Algorithmic local monomialization of a binomial: a comparison of different approaches}
\author[Gaube]{Sabrina Alexandra Gaube}
\address{
	Institut f\"ur Mathematik, 
	Carl von Ossietzky Universit\"at Oldenburg, 
	26111 Oldenburg, Germany}
\email{sabrina.gaube@uni-oldenburg.de}
\email{bernd.schober@uni-oldenburg.de}
\author[Schober]{Bernd Schober}
\thanks{The second named author was partially supported by the DFG-project ``Order zeta functions and resolutions of singularities" (DFG project number: 373111162).}
\date{\today}
\subjclass[2020]{13F65, 14B05, 14J17, 13P99} 
\keywords{binomials, resolution of singularities, monomialization, implementation, Singular}
\begin{document}

\begin{abstract}
	We investigate different approaches to transform a given binomial
	into a monomial via blowing up appropriate centers. 
	In particular, we develop explicit implementations in {\sc Singular},
	which allow to make a comparison on the basis of numerous examples.
	We focus on a local variant, where centers are not required to be chosen globally.
	Moreover, we do not necessarily demand that centers are contained in the singular locus. 
	Despite these restrictions, the techniques are connected to the computation of $ p $-adic integral whose data is given by finitely many binomials. 
\end{abstract}

\maketitle

\section{Introduction}
	The goal of this article is to investigate and to compare different methods to transform
	a binomial into a monomial
	via blowing up appropriate centers. 
	Within this, we develop explicit implementations so that the different approaches can be compared on the basis of numerous examples.
	This is motivated by the DFG-project ``Order zeta functions and resolutions of singularities" (principal investigators: Christopher Voll and Anne Fr\"uhbis-Kr\"uger), 
	of which the second named author is part of. 
	In there, a technique for the explicit computation of special 
	$ p $-adic integrals is developed using monomialization. 
	In particular, the structure of the integrals considered there allow a reduction to the case of finitely many binomials. 
	An increasing complexity in the $ p $-adic integrals 
	(which is reflected in a rapidly increasing number of variables and binomials)
	requires to find monomialization algorithms which keep the numbers of blowups and of final charts that have to be considered small.
	Since the problem is of combinatorial nature, we focus on the situation over a field, while we briefly discuss the case over $ \IZ_p $ in Remark~\ref{Rk:ZZ_p}.  
	
	\smallskip 
	
	Let $ K $ be a field and let 
	$ f = \ux^A - \cc \ux^B \in K[\ux] : = K [x_1, \ldots, x_n]  $ be a binomial,
	where $ \cc \in K^\times $ and 
	$ \ux^{A} = x_1^{A_{1}} \cdots x_n^{A_{n}} $ for 
	$ A = (A_{1}, \ldots, A_{n}) \in \IZ_{\ge 0}^n $. 
	We say that $ f $ is {\em locally a monomial} if 
	for every point $ q \in\IA_K^n = \Spec(K[\ux]) $ there exists a regular system of parameters for the local ring $ \mathcal{O}_{\IA_K^n,q} $ such that $ f $ is a monomial times a unit with respect to these parameters. 
	For example, $  x_1^3 x_2^2 (1 - x_1)^4 $ is locally monomial since for every $ q \in \IA_K^2 $ at least $ x_1 $ or $ 1-x_1 $ is a unit.
	On the other hand,
	$ x_1 x_2 (x_1+x_2) $ is not
	locally a monomial as there is no regular system of parameter for the local ring at the origin such that 
		$ x_1 x_2 (x_1+x_2) $ becomes a monomial times a unit.
	The tool that we want to apply to make a binomial locally monomial are blowups,
	e.g., $  x_1 x_2 (x_1+x_2) $ becomes locally monomial after blowing up with center $ V (x_1, x_2) $.
	
	The blowup $ \pi \colon \Bl_D (\IA_K^n) \to \IA_K^n  $ in a regular center $ D = V (x_i \mid i \in I ) $, for some $ I \subseteq \{ 1, \ldots, n \} $ is covered by the standard charts $ U_i := D_+(X_i) \cong \IA_K^n $, $ i\in I $, where $D_+(X_i) :=\Bl_D (\IA_K^n)  \setminus V(X_i) $ is the complement of $V(X_i)$.
	On $ U_i $, the blowup $ \pi $ is given by the morphism
	\[
	\begin{array}{rcll}
		K[x_1,\ldots, x_n] & \longrightarrow & K[x_1',\ldots, x_n'] =: K[\ux']
		\\[5pt]
		x_j & \mapsto & x_i' \, x_j', & \mbox{if } j \in I \setminus \{ i \}, 
		\\
		x_j & \mapsto & x_j', & \mbox{if } j = i \mbox{ or } j \notin I,
		\\ 
		\lambda & \mapsto & \lambda, & \mbox{for } \lambda \in K.  
	\end{array} 
	\] 
	The image of $ f \in K[\ux] $ in $ K[\ux'] $ is called the total transform of $ f $ in $ U_i $. 
	On the level of exponents, the above morphism corresponds to the map 
	\[  
		\phi_{\pi,i} \colon \IZ_{\ge 0}^n \longrightarrow  \IZ_{\ge 0}^n ,
	\]
	where $	A = (A_1, \ldots, A_n) $ is mapped to
	$ A' = (A_1', \ldots, A_n') $,
	which is defined by
	$ A_i' := \sum_{j\in I} A_j $
	and $ A_j' := A_j $ if $ j \ne i $.
	For the later use, recall that $ |A| := \sum_{i=1}^n A_i $. 
	
	In every $ U_i \cong \IA_K^n $, we may choose a center $ D_i $ of the same shape as $ D $ and we can iterate this to obtain a sequence of local blowups.
	We say that a finite sequence of local blowups obtained by iterating the previous procedure is a {\em local monomialization of $ f = \ux^{A} - \cc \ux^{B} $} if
	the total transform of $ f $ is locally a monomial in every final chart of the blowup tree. 
	For example, the latter is the case if the total transform is of the form (up to multiplication by a non-zero constant) 
	\begin{equation}
	\label{eq:ex_loc_mon_cond}
		\ux^C (1 - \mu \ux^{B'}) 
		\ \ 
		\mbox{ or }
		\ \  
		\ux^C (x_i - \mu \ux^{B'}),
	\end{equation}	
	for $ C,B' \in \IZ_{\ge 0}^n $ and $ \mu \in K^\times $, 
	where we require in the second case that $C_i = B'_i = 0 $ for the special $ i \in \{ 1, \ldots, n \} $ given. 	
	{The last hypothesis implies that we may introduce the coordinate change $ y_i := x_i - \mu \ux^{B'} $ such that $ \ux^C (x_i - \mu \ux^{B'}) = \ux^C y_i $ becomes a monomial.
	It is clear that $ \ux^C (1 - \mu \ux^{B'}) $ is a monomial times a unit if $ 1 - \mu \ux^{B'} $ is invertible in $ \mathcal{O}_{\IA_K^n,q} $.
	On the other hand, if $ 1 - \mu \ux^{B'} $ is not invertible, we have to distinguish two cases. 
	Let $ d $ be the greatest common divisor of the entries of $ B' $. 
	First, if $ d = 1 $ or if $ d > 1 $ and $ \mu $ has no $ d' $-th root in $ K $ with $ d' | d $, then $ 1 - \mu \ux^{B'} $ is irreducible and regular.
	Hence, it can be extended to a regular system of parameters at $ q $. 
	Otherwise, $ 1 - \mu \ux^{B'} $ is not irreducible, but then all but one of the distinct irreducible factors are invertible since they arise from the factorization of $ T^{d'} - 1 \in K[T] $ (with maximal $ d' $ as in the first case and $ T $ a substitute for $ ( \mu \ux^{B'} )^{1/d'} \in K[\ux] $).
	Note that in both cases all $ x_i $ with $ B_i' \neq 0 $ are units in $ \mathcal{O}_{\IA_K^n,q} $}

	Observe that \eqref{eq:ex_loc_mon_cond} is not equivalent to being locally monomial. 
	For example, $ x^2 - y^2 \in \mathbb{C}[x,y] $ and $ x^p + y^p \in \mathbb{F}_p[x,y] $ ($ p \in \IZ $ prime) do not fulfill  \eqref{eq:ex_loc_mon_cond},
	but they are monomial after a suitable change of variables
	$ \widetilde x := x+ y, \widetilde y := x-y $ in the first example
	and $ \bar x := x + y $ for the second.

	We choose the centers in each chart independent of the other charts,
	i.e., we do not necessarily obtain a sequence of global blowups. 
	This provides more freedom in the choice of the center and is still sufficient for the explicit computations,
	where the local charts are interpreted as case distinctions. 
	In \cite[Question~5.6]{Bernd}, this local variant of monomialization via blowups is discussed in the context of resolution of singularities.

	\smallskip
	
	The differences in the methods appear in choice of the center for the next blowup. 
	Let us briefly explain the variants.
	Consider a binomial 
	\[  
		f = \ux^C (\ux^A - \cc \ux^B) , 
	\]
	where $ A, B, C \in \IZ_{\ge 0}^n $ are such that $ A_i B_i = 0 $ for all $ i \in \{ 1, \ldots, n \} $.
	Set 
	\[
		g : = \ux^A - \cc \ux^B .
	\]
	Suppose that $ f $ is not locally monomial. 
	The basic idea for the four variants that we consider are:
	\begin{enumerate}
		\item[\bf (1)] 
		{\em Centers contained in the locus of maximal order}
		(section~\ref{theory:maxord} {/ Construction~\ref{Constr:center_max_ord}}).
		Choose $ D = V (x_i \mid i \in I ) $ such that $ D $ is contained in the locus of maximal order of $ V(g) $.
		This is equivalent to imposing 
		\[  
			\min \big\{\sum_{i\in I} A_i, \sum_{i\in I} B_i \big\} =
			\min \big\{ |A|, |B| \big\} .
		\] 
		
		\item[\bf (2)] 
		{\em Centers of codimension two}
		(section~\ref{theory:codim2} {/ Construction~\ref{Constr:center_codim2}}).
		Choose $ i , j \in \{ 1, \ldots, n \} $ such that $ A_i \neq 0 $, $ B_j \neq 0 $ and both are maximal.
		Then, the center for the blowup is 
		 $ D = V (x_i, x_j ) $.

		\smallskip 

		\item[\bf (3)] 
		{\em Centers of minimal codimension contained in the singular locus}
		(section~\ref{theory:singloc}~{/ Construction~\ref{Constr:center_non-monomial}}).
		If $ \min \big\{ |A|, |B| \big\} \ge 2 $,
		choose $ I \subseteq \{ 1, \ldots, n \} $ such that $ \sum_{i\in I} A_i \geq 2 $, $ \sum_{i\in I} B_i \geq 2 $
		and such that $ \#I $ is minimal with this property.
		Then $ D := V(x_i \mid i \in I ) $.
		Else, choose $ D $ as in (2).
		
		\smallskip

		\item[\bf (4)] 
		{\em Centers of minimal codimension contained in an exceptional divisor or contained in the singular locus}
		(section~\ref{theory:preimagesingloc} {/ Construction~\ref{Constr:center_exceptional_non-monomial}}).
		If there is a center $ D $ of type (2) contained in an exceptional divisor, we choose this.
		Otherwise, we follow (3).
	\end{enumerate}
	Note that the centers are not necessarily uniquely determined and one might have to make a choice. 
	In the respective sections, we provide examples for this phenomenon.

	While (1) follows the usual approach to resolution of singularities,
	method (2) solely has the motivation to minimize the numbers of charts after a single blowup in order to make it easier to control the transform of the binomial. 
	In particular, the resulting morphism is not necessarily an isomorphism outside of the singular locus of $ g $.
	In (3), we consider a mixture of (1) and (2);
	we try to choose the centers as large as possible, but moreover, we require that
	the centers are contained in the singular locus of $ g $ (resp.~its variant after a blowup) and if the latter is empty, we follow (2).
	Finally, in (4), we relax the last condition (3) and 
	allow centers of codimension two,
	which are not necessarily contained in the singular locus of $ g $, 
	if they are contained in an exceptional divisor.

	In the respective sections, we discuss the benefits of each approach
	and show the termination of the local monomialization algorithm resulting from the different choices, see Propositions~\ref{Prop:max_ord_terminates}, \ref{Prop:codim2_terminates}, \ref{Prop:singlocus_terminates}, and Corollary~\ref{Cor:except_singlocus_terminates}, respectively.
	Along this, we discuss algorithms for explicit implementations of each variant to monomialize a binomial, which have been realized in the open source computer algebra system {\sc Singular} \cite{Sin}.	
			
	We study the binomial through the appearing exponents $ A,B,C $ and their behavior along the blowups using $ \phi_{\pi,i} $. 
	More precisely, we deduce from the exponents numerical measures that detect how far the given binomial is from being monomial. 
	Then we show that the respective measure decreases strictly after a single blowup following the corresponding procedure and that a strict decrease may only appear finitely many times. 
	
	In the final section, we analyze the different variants for the choice of the centers by comparing the numbers of charts for a worst case scenario
	and for numerous explicit examples.
	The latter is based on an implementation of the discussed algorithms in {\sc Singular}. 
	As a measure for the complexity, we consider the number of charts along the monomialization process as well as at the end.
	A brief summary is that variant (1) has a significant larger number than the other three, while (2) is  often the most efficient algorithm. 
	But there exist cases, where (3) and (4) are slightly better than (2). 
	As mentioned above, we have to make a choice among the possible centers. 
	For some of the example, we study the different results if we vary the choices.

	\smallskip 

	\begin{acknowledgements}
		We thank Anne Fr\"uhbis-Kr\"uger for discussions, comments, and her guidance with {\sc Singular}.
		Further, we thank the referee for their helpful comments.
	\end{acknowledgements}

\smallskip

\section{The basic algorithmic framework and blowups}

We begin by discussing the basic structure for the implementation of a monomialization procedure. 
Within this, we also introduce numerical invariants, which we later use to prove the termination of the different monomialization methods. 
Furthermore, we provide an algorithm testing whether a given binomial 
fulfills condition \eqref{eq:ex_loc_mon_cond}, which implies that the binomial 
is locally monomial. 
Finally, we give an implementation of the transformation of the exponents along a blowup.

\smallskip 

The main method is the same for all of the four strategies.
The difference of the monomialization methods appears only in the choice of the center.
In Figure~\ref{flowchart}, we provide the flow chart of the main method and in Algorithm~\ref{main} the precise implementation. 

\begin{figure}
	\begin{tikzpicture}[node distance = 2cm,auto]
	\node [cloud] at (0,0) (start) {binomial $f=\ux^{\tilde{A}}- \cc \ux^{\tilde{B}}$};
	\node [block] at (0,-2) (init) {initialization $f=\ux^{C}(\ux^{A} -\cc \ux^{B})$};
	
	\node [decision] at (0,-5) (test){finished?};
	
	\node [cloud] at (4,-5)(return){return};
	
	\node [block] at (0,-8) (center) {center};
	
	\node [block] at (0,-10) (trans){transformation};
	
	
	\draw[->] (start)  --  (init);
	\draw[->] (init)  --  (test);
	\draw[->] (center)  --  (trans);
	\draw[->] (trans) -- (-3,-10) -- (-3,-5) -- (test);
	\draw[->] (test)  -- node[above]{yes}  (return);
	\draw[->] (test)  -- node[right]{no}   (center);
	\end{tikzpicture}
	\caption{Flow chart of the main method.}
	\label{flowchart}
\end{figure}

The implementations are of combinatorial nature.
Instead of working with the binomial $ \ux^C (\ux^A - \cc \ux^B ) $, 
we consider the exponents $ A = ( A_1,\ldots,A_n) $, $ B = ( B_1,\ldots,B_n)$ and $ C = (C_1,\ldots,C_n) $.
Additionally, we introduce a vector of ones and zeros $ E = ( E_1,\ldots, E_n) \in \{ 0, 1 \}^n $, 
where we encode, which variables correspond to exceptional divisors,
i.e., $ E_i = 1 $ if and only if $ \operatorname{div}(x_i) $ is an exceptional divisor.  
This will be necessary for the variant for choosing the center of section~\ref{theory:preimagesingloc}, {see Construction~\ref{Constr:center_exceptional_non-monomial}}.

Before stating and explaining Algorithm~\ref{main}, 
let us introduce the following numbers,
which play an important role in parts of the monomialization procedures discussed in the present work.

\begin{defi}
	\label{Def:Invariant_codim_2}
	Let $ K $ be any field.
	Let $ g = \ux^A - \cc \ux^B \in K[\ux] $,
	for $ \cc \in K^\times $ and $ A, B \in \IZ_{\ge 0}^n $ 
	such that $ A_i B_i = 0 $ for all $ i \in \{ 1, \ldots, n \} $.
	We define
	\begin{eqnarray*}
		\alpha(g)&:=& \max\{A_i \mid i \in \{1,\dots,n\}\} 
		\\
		\mathfrak{a}(g)&:=& \#\{i \in \{1,\dots,n\} \mid A_i = \alpha(g) \} 
		\\
		\beta(g)&:=& \max\{B_i \mid i \in \{1,\dots,n\}\} 
		\\
		\mathfrak{b}(g)&:=& \#\{i \in \{1,\dots,n\} \mid B_i = \beta(g)\}
		\\
		\iota(g) &:=& \left( \alpha(g), \mathfrak{a}(g), \beta(g), \mathfrak{b}(g)\right) \in \mathbb{Z}_{\ge 0}^4.
	\end{eqnarray*}
	Here, we equip $ \IZ_{\ge 0}^4 $ with the lexicographical ordering $ \leq_\lex $.
	Given $f = \ux^{C} g $ with $ C \in \IZ_{\ge 0}^n $,
	we also write $ \alpha(f) := \alpha(g), \ldots,  \iota(f) := \iota(g) $,
	if no confusion is possible.
\end{defi}

Clearly, $ \iota(g) $ depends on the order of the monomials in $ g $.
In general, we have $ \iota(\ux^A - \ux^B) \neq \iota(-\ux^B + \ux^A) $.
Since we fix an order of the monomials in an implementation anyways, 
we will work later with the string $ (A,B) $ instead of $ g $
and
we neglect the matter of making $ \iota(g) $ independent of the order of the monomials.

\begin{algorithm}
	\caption{Main method (for a description see Remark~\ref{Rk:Algo1})}
	\begin{algorithmic}[1]
		\INPUT $ f = \ux^{\tilde{A}}- \cc \ux^{\tilde{B}} $, $ \mbox{mode} \in \{1,2,3,4\}$,
		where $ (\ux) = (x_1, \ldots, x_n) $, $ \tilde{A}, \tilde{B} \in \IZ_{\ge 0}^n $
		\OUTPUT 
		list $ L $, where $ L[i] $ is the data of the $ i $-th chart

		\State list $Lf $
		\State $ Lf[1] $ = list$(A,B,C,(\underline{0}))$,
		where $f=\ux^{C}(\ux^{A} -\cc \ux^{B})$ such that $ A_i B_i  = 0 $, for all $ i $,
		and $ ( \underline{0}) \in \IZ_{\ge 0}^n $

		\If{check\_finished($Lf[1]$)}
		\ \Comment{see Algorithm~\ref{alg:locallymonomial}}
		\State $ Lf[2] = Lf[3] = Lf[4] = \varnothing $
		\State $ L[1] = Lf $
		\State\Return $ L $
		\EndIf
		
		\State $ Lf[2] $ = $ \iota(f)$ 
		= list$( \alpha, \mathfrak{a}, \beta, \mathfrak{b} ) \in \mathbb{Z}_{\ge 0}^4 $ 
		\ \Comment{Definition~\ref{Def:Invariant_codim_2}}
		\State  $ I $\_{center} = compute\_center$ ( Lf ,\mbox{mode}) $
		
		\State $ Lf[3] $ = $ I $\_{center} 
		\State	intmat path$[2][1] = 0,-1$
		\State	$ Lf[4] $ = path
		\State $ L [1]  = Lf $

		\State successors = fill\_list\_for\_next\_charts$(L[1],1) $
		\For{$ L_+ \in $ successors}
		\State $ L[size(L)+1] = L_+ $
		\EndFor

		\State $i = 2$
		\While{ $ i \le size(L)$}
		\State $L[i]$ = transformation($L[i] $, $ \mbox{mode} $)
		\ \Comment{see Algorithm~\ref{transformation}}
		\If{check\_finished($ L[i][1] $) == false}
		\State successors = fill\_list\_for\_next\_charts$(L[i],i) $
		\For{$ L_+ \in $ successors}
		\State $ L[size(L)+1] = L_+ $
		\EndFor
		\EndIf
		\State $i = i+1$
		\EndWhile
		\State\Return L
	\end{algorithmic}
	\label{main}
\end{algorithm}

\begin{Rk}[Algorithm~\ref{main}]
	\label{Rk:Algo1}
	The {\em input}  
	is:
	\begin{itemize}
		\item 
		a binomial
		$ f = \ux^{\tilde{A}}- \cc \ux^{\tilde{B}} \in K [\ux] = K[x_1, \ldots, x_n]$;
		
		\item 
		an integer $ \mbox{mode} \in \{ 1, 2, 3, 4 \} $,
		which determines the method for choosing the centers;
	\end{itemize}  
	The {\em output} of Algorithm~\ref{main}
	is a list $ L $ 
	which consists of all charts of the monomialization process.
	From this list, one can determine a list of all leaves of the monomialization tree,
	i.e., of all final charts.
	The {\em data in a chart $ L[i] $} is of the following form:
	
	\smallskip 
	
	\begin{itemize} 
		\item[{$[1]$}] $(A,B,C,E) \in ( \IZ_{\ge 0}^n )^4 $ such that 
		the total transform of $ f $ in the chart is 
		$ f = \ux^{C}(\ux^{A} -\cc \ux^{B}) $ 
		and $ A_i B_i  = 0 $, for all $ i \in \{ 1, \ldots, n \} $.
		Furthermore,  $ E \in \{ 0, 1 \}^n $ is the vector encoding which variables are exceptional.

		\smallskip

		\item[{$[2]$}] $ \iota(f) = ( \alpha, \mathfrak{a}, \beta, \mathfrak{b} ) \in \mathbb{Z}_{\ge 0}^4 $ is the measure introduced in Definition~\ref{Def:Invariant_codim_2}.

		\smallskip 
		
		\item[{$[3]$}] $  I\mbox{\_center} \subseteq \{ 1, \ldots, n \} $
		is the index set such that 
		$ \langle x_i \mid i \in I\mbox{\_center} \rangle $ is the ideal defining the center for the next blowup.

		\smallskip 
		
		\item[{$[4]$}] a pathmatrix
		$\begin{pmatrix}
		0 & \cdots & x\\
		-1 & \cdots & y
		\end{pmatrix}
		$ such that $x$ is the number of the predecessor chart. 
		The successors of the predecessors are labeled from 1 to $\#$successors.
		The number $ y $ indicates which of these successors the given chart is. 
		This entry is not important for the monomialization process, but for a later evaluation of the final data to keep track of the global picture.
	\end{itemize}

	\smallskip 
	
	First, Algorithm~\ref{main} performs an initialization,
	by determining the exponents such that 
	$  f = \ux^{\tilde{A}}- \cc \ux^{\tilde{B}}  =  \ux^{C}(\ux^{A} -\cc \ux^{B}) $
	has the desired form. 
	Since there are no exceptional divisors yet $ E = ( \underline{0}) $. 
	Then, we check whether $ f $ 
	verifies condition~\eqref{eq:ex_loc_mon_cond}
	using the method {\em check\_finished},
	see Algorithm~\ref{alg:locallymonomial}.
	If \eqref{eq:ex_loc_mon_cond} holds, then the binomial is locally monomial and we fill the list $ Lf $ with trivial data and return $ L $. 
	
	If \eqref{eq:ex_loc_mon_cond} is not fulfilled,
	we determine the full data of the chart (lines 7--11).
	In there, \textit{compute\_center$ ( Lf ,\mbox{mode}) $} is the method determining the index set of the center for the next blowup.
	The input $ \mbox{mode} \in \{ 1, 2, 3, 4 \} $ 	fixes, which of our four methods is used. 
	In the following sections, we discuss the methods for choosing the center in details.
	Furthermore, {\em intmat} initiates an integer matrix called {\em path}, which encodes the tree structure of the monomialization process.
	
	Then, the method 
	\textit{fill\_list\_for\_next\_charts} copies the data from $L[i]$ (in line 13 for $ i = 1 $) to a list 
	\textit{successors}, 
	which contains as many charts as needed (depending on the 
	center of the upcoming blowup). 
	The only difference in the data of the charts 
	in \textit{successors} is the adapted \textit{path} matrix which contains the 
	tree of the blowup procedure.
	After this, the charts from list \textit{successors} are added to the end of $ L $.
	Let us explain this step more in details: 
		If we want to determine the successors for $ L[i] $
		and if the upcoming blowup has $ m $ many charts 
		and if $ L $ has $ k $ entries in total (with $ k \ge i $) before adding the successors to it, 
		then the successors become the entries $ L[k+1], \ldots, L[k+m] $
		and 
		we extend {\em path} for the successor $ L[k+j] $ by the column $ (i, j)^T $ at the end since it is the $ j $-th chart of the blowup in $ L[i] $, where $ j \in \{ 1, \ldots, m \} $.

	In the while-loop (lines 17--23), 
	the data of
	$L[i]$ 
	(except for {\em path})
	is modified 
	such that it becomes the transformed version of its predecessor with respect to the previously determined center.
	The transformation algorithm is provided in Algorithm~\ref{transformation} and described in Observation~\ref{Obs:bu}.
	Finally, we verify whether the data in $L[i]$ fulfills \eqref{eq:ex_loc_mon_cond}. 
	If so, then the procedure continues 
	with the entry $ L[i+1] $ if it exists
	(i.e., with the next chart which needs to be handled) 
	or it stops if $ L $ has $ i $ entries.
	Otherwise, if \eqref{eq:ex_loc_mon_cond} does not hold, we blow up, the successor charts are stored at the end of 
	the list, analogous to before,
	and we continue with the chart $ i + 1 $.
	The while-loop will eventually end since we will show in the following sections that the respective monomialization procedures terminate. 
	
	\hfill {\em (End of Remark~\ref{Rk:Algo1}.)}
\end{Rk}

\begin{algorithm}
	\caption{check\_finished}
	\begin{algorithmic}[1]
		\INPUT list $ ( A, B, C, E ) $  of vectors in $ \IZ_{\ge 0}^n $ such that $ A \neq B $ and $ A_i B_i = 0 $ for all $ i \in \{ 1, \ldots, n \} $
		\OUTPUT true, if the binomial $ x^C (x^A - x^B ) $ fulfills \eqref{eq:ex_loc_mon_cond}; false otherwise
		\If{$\min \{|A|,|B|\}==0$}
		\State\Return true
		\EndIf
		\If{$|A| = 1 $ or $|B| = 1$}
		\If{$\exists \, i: C_i = 0 $ and $( A_i = 1 $ and $|A|=1)$ or $( B_i = 1 $ and $|B|=1)$ }
		\State\Return true
		\EndIf
		\EndIf
		
		\State\Return false
	\end{algorithmic}
	\label{alg:locallymonomial}
\end{algorithm}

Note that in Algorithm~\ref{alg:locallymonomial} it does not matter that the coefficient is $ 1  $ instead of $ \rho $.
The result is the same.

\begin{Obs}
	\label{Obs:bu} 
	Let $ f  = \ux^C (\ux^A - \cc \ux^B) \in K[\ux] $ be a binomial with $ A_i B_i = 0 $ for all $ i \in \{ 1, \ldots, n \} $.
	Let us consider how the exponents change along the blowup with center $ D = V ( x_1, \ldots, x_m ) $, for some $ m \in \{ 2 , \ldots, n \} $. 
	In the $ X_1 $-chart, we have
	\[
	x_1 = x_1' , 
	\ x_2 = x_1' x_2' , 
	\ \ldots, 
	\ x_m = x_1' x_m' ,
	\ x_{m+1} = x_{m+1}',
	\ \ldots,  
	\ x_{n} = x_{n}'.
	\]
	On the level of exponents, this provides
	\[
	A = (A_1, A_2, \ldots, A_n) \mapsto 
	\widetilde A' := 
	(A_1+A_2+\ldots + A_m , A_2, \ldots, A_n)
	\]
	and analogous for $ B $ and $ C $. 
	
	We factor the total transform of $ f $ as $ \ux'^{C'} (\ux'^{A'} - \cc \ux'^{B'}) $ such that $ A_i' B_i' = 0 $ for all $ i \in \{ 1, \ldots, n \} $. 
	If we set 
	\[  
	\delta := \min \{ A_1 + \ldots + A_m,  B_1 + \ldots + B_m \} ,
	\] 
	then we get 
	\[ 
	\begin{array}{l} 
	A' = (A_1+A_2+\ldots + A_m - \delta , A_2, \ldots, A_n), \\[5pt]
	B' = (B_1+B_2+\ldots + B_m - \delta , B_2, \ldots, B_n), \\[5pt]
	C' = (C_1+C_2+\ldots + C_m + \delta , C_2, \ldots, C_n).
	\end{array}
	\]
	The other charts are analogous. 
	Furthermore, it is straight forward to adapt this to blowups in centers of the form $ V (x_i \mid i \in I ) $, where $ I \subseteq \{ 1, \ldots, n \} $ is not necessarily equal to $ \{ 1, \ldots, m \} $. 
\end{Obs}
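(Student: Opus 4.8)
The plan is to verify the three displayed formulas by a direct bookkeeping of exponents, treating the blowup substitution monomial by monomial and then normalizing so that the factorization condition $ A_i' B_i' = 0 $ is restored.

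First I would substitute the chart equations $ x_1 = x_1' $, $ x_j = x_1' x_j' $ for $ 2 \le j \le m $, and $ x_j = x_j' $ for $ j > m $ into the monomial $ \ux^A $. Collecting the power of each $ x_j' $ that arises, the variable $ x_1' $ picks up the exponent $ A_1 $ from $ x_1 $ together with one copy of $ A_j $ from each factor $ x_j = x_1' x_j' $ with $ 2 \le j \le m $, while $ x_j' $ for $ j \ge 2 $ retains exactly its original exponent $ A_j $. This yields $ \ux^A \mapsto \ux'^{\widetilde A'} $ with $ \widetilde A' = (A_1 + \cdots + A_m, A_2, \ldots, A_n) $, and the identical computation applied to $ B $ and $ C $ produces $ \widetilde B' $ and $ \widetilde C' $. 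Hence the total transform of $ f $ equals $ \ux'^{\widetilde C'}\bigl(\ux'^{\widetilde A'} - \cc\, \ux'^{\widetilde B'}\bigr) $.

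Next I would bring this into the required normal form $ \ux'^{C'}\bigl(\ux'^{A'} - \cc\, \ux'^{B'}\bigr) $ with $ A_i' B_i' = 0 $ for all $ i $. The key observation is that the factorization condition can only fail in the first coordinate: for every index $ j \ge 2 $ the exponents are unchanged, so $ \widetilde A_j'\, \widetilde B_j' = A_j B_j = 0 $ by the standing hypothesis, whereas in the first coordinate both $ \widetilde A_1' = A_1 + \cdots + A_m $ and $ \widetilde B_1' = B_1 + \cdots + B_m $ may be positive. I would therefore factor the common power $ (x_1')^{\delta} $, with $ \delta = \min\{\widetilde A_1', \widetilde B_1'\} $, out of the binomial part and absorb it into the monomial factor. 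This is precisely the redistribution that subtracts $ \delta $ from the first entry of $ \widetilde A' $ and of $ \widetilde B' $ and adds $ \delta $ to the first entry of $ \widetilde C' $, giving the stated vectors $ A' $, $ B' $, $ C' $.

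Finally I would confirm that the output satisfies $ A_i' B_i' = 0 $ for all $ i $: for $ i \ge 2 $ this is inherited unchanged from the hypothesis, and for $ i = 1 $ it holds because $ \delta $ is the minimum of $ \widetilde A_1' $ and $ \widetilde B_1' $, so one of $ A_1' = \widetilde A_1' - \delta $ and $ B_1' = \widetilde B_1' - \delta $ vanishes. The remaining charts follow by the same argument with the index $ 1 $ replaced by the chart index $ k \in \{1, \ldots, m\} $, and the generalization to an arbitrary center $ V(x_i \mid i \in I) $ by replacing $ \{1, \ldots, m\} $ with $ I $ throughout. No step presents a genuine difficulty; the only point requiring care is the claim that a power of the single variable $ x_1' $ has to be factored out, which relies exactly on the hypothesis $ A_i B_i = 0 $ guaranteeing that no common power can be factored in the coordinates $ j \ge 2 $.
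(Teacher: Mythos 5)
Your proposal is correct and matches the paper's own (implicit) argument: the Observation is verified exactly by substituting the chart equations, collecting exponents to get $\widetilde A', \widetilde B', \widetilde C'$, and then factoring out $(x_1')^{\delta}$ to restore the condition $A_i'B_i'=0$, which as you note can only fail in the first coordinate. Your added check that $\delta$ being the minimum forces $A_1'B_1'=0$ is the right justification and is precisely what the paper relies on.
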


This leads to Algorithm~\ref{transformation} for determining the transform of a binomial in a given chart of the blowup in $ V (x_i \mid i \in I ) $.

\begin{algorithm}
	\caption{transformation (for a description see Remark~\ref{Rk:Algo_transfo})}
	\begin{algorithmic}[1]
		\INPUT list $ M $, $ \mbox{mode} \in \{1,2,3,4\}$,
		where $ M $ is of the same form as $ L[i] $ in Remark~\ref{Rk:Algo1}
		\OUTPUT list $retList$ which is the transformed variant of chart $ M $
		
		\State $ (A,B,C,E) = M[1] $
		\State path = $ M[4] $
		\State $ I = M[3] $  \Comment {the index set of the center}
		\State $ i $ = path[2,ncols(path)]  \Comment {so $ M $ is the $ X_i$-chart of the blowup} \Statex\Comment{with center $ V (x_i\mid i \in I) $}
		\State $\delta =  \min\{ \sum_{j \in I} A_j, \sum_{j \in I} B_j \}$
		\State $A_i = \sum_{j \in I} A_j \, -  \delta$
		\State $B_i = \sum_{j \in I} B_j \, - \delta $
		\State $C_i = \sum_{j\in I } C_j \, + \delta$
		\State $E_i = 1  $
		\State $ retList[1] $ = list$ (A,B,C,E) $
		\If{check\_finished($retList[1]$)}
		\State $ retList[2] = retList[3] = \varnothing $
		\State $ retList[4]$  = path;
		\State\Return $ retList $
		\EndIf
		\State $ retList[2] = \iota(\ux^{C}(\ux^{A} -\cc \ux^{B}))$ 
		\Comment  {Definition~\ref{Def:Invariant_codim_2}}
		\State	$ I $\_center = compute\_center($retList[1], \mbox{mode} $)
		\State $ retList[3] = I $\_center
		\State $ retList[4] = $ path
		\State\Return $ retList $
	\end{algorithmic}
	\label{transformation}
\end{algorithm}

\begin{Rk}[Algorithm~\ref{transformation}] 
	\label{Rk:Algo_transfo}
	The {\em input}  
	is:
	\begin{itemize}
		\item 
		a list $ M $,
		which represents the data of a chart
		and hence is of the same form as $ L[i] $ in 
		Remark~\ref{Rk:Algo1}
		
		\item 
		an integer $ \mbox{mode} \in \{ 1, 2, 3, 4 \} $,
		which determines the method for choosing the centers;
	\end{itemize}  
	The {\em output} of Algorithm~\ref{transformation}
	is the transformed data of the input chart.
	
	First, we initialize the data (lines 1--4).
	In particular, we specify the index set $ I $ corresponding to the center of the blowup
	and the element $ i \in I $ such that $ M $ corresponds to the $ X_i $-chart of the blowup.
	After that, we transform the exponents $ (A,B,C,E) $
	as described in Observation~\ref{Obs:bu}
	(but now for the general case) and mark the variable $ x_i $ as exceptional in lines 6--9. 
	Finally, we check whether the transformed binomial fulfills \eqref{eq:ex_loc_mon_cond} 
	and determine the remaining data so that the output data is of the same form as $ L[i] $ in Remark~\ref{Rk:Algo1}. 
	(Note that the path matrix is extended in line 20 of Algorithm~\ref{main}.)
\end{Rk}

The only part of implementation which differs in the various modes is the computation of the center. 
We have seen above that every other method of the 
implementation only uses the \textit{mode} parameter in order to call the 
\textit{compute\_center}-method which is described later.

\smallskip

\section{Centers contained in the locus of maximal order}
\label{theory:maxord}

In this section we discuss the first of the four variants for the choice of center in details.
We fix a binomial 
\[  
f = \ux^C (\ux^A - \cc \ux^B) \in K[\ux] = K[x_1, \ldots, x_n], 
\]
where $ \cc \in K^\times $, $ A, B, C \in \IZ_{\ge 0}^n $ are such that $ A_i B_i = 0 $ for all $ i \in \{ 1, \ldots, n \} $
and $ K $ is a field.
We set
\[ 
	g : = \ux^A - \cc \ux^B .
\]  
Observe that the condition $ A_i B_i = 0 $ implies that at least one of them is zero and hence $ x_i $ cannot be factored from $ g $.

If $ g = 1 - \mu \ux^B $ or $ {f}  = \ux^C (x_i - \mu \ux^B) $, for $ C, B \in \IZ_{\ge 0}^n $ and $ \mu \in K^\times $, 
where we require in the second case that $C_i = 0 $ for the given $ i $, then $ f $ is locally monomial and no blowups are required. 
Hence, throughout this section, we assume  
that \eqref{eq:ex_loc_mon_cond} is not fulfilled, i.e., that the following condition holds:
\begin{equation}
\label{eq:cond_not_loc_mon}
\left\{ 
\hspace{-10pt}
\begin{minipage}{0.8\textwidth}
	\begin{itemize}
	\item either $ \min \{ |A|,|B|\} \geq 2  $, or
	
	\smallskip 
	
	\item  $ \min \{ |A|,|B|\} = 1 $ and $ C_i \neq 0 $ for every $ i $ such that $ A_i = 1 $ if $ |A|=1 $, or $ B_i = 1 $ if $ |B|=1 $.
	
	\end{itemize} 
\end{minipage}
\right.
\end{equation}
Again, let us point out that \eqref{eq:cond_not_loc_mon} does not imply that $ f $ is not locally monomial. 
For example, $ x^2 - y^2 \in \mathbb{C}[x,y] $ is monomial after introducing $ \widetilde x := x+y $, $ \widetilde y := x-y $, but \eqref{eq:cond_not_loc_mon} holds.
Nonetheless, it is not hard to test with a computer whether \eqref{eq:ex_loc_mon_cond} is true
and thus we admit that we might perform some blowups, which are not needed.

A common approach in resolution of singularities for a hypersurface $ V(g) $ 
is to consider regular centers contained in its locus of maximal order. 
First, let us recall its definition. 

\begin{defi}
	Let $ h \in K[\ux] \setminus \{ 0 \}  $ be a non-zero polynomial and set $ X := V(h) \subset \Spec(K[\ux]) = \IA_K^n $. 
	Let $ \mathfrak p \subset K [\ux] $ be a prime ideal. 
	Geometrically, we denote by $ x_{\mathfrak{p}} \in  \IA_K^n $ the point that corresponds to $ \mathfrak p $.
	\begin{enumerate}
		\item  
		The {\em order of $ X $ at $ x_{\mathfrak p} $ (resp.~of $ h $ at $ \mathfrak{p} $)} is defined as 
	\begin{equation*}
	\ord_{x_{\mathfrak p}} (X) := 
	\ord_{\mathfrak{p}}(h):= \sup\{ \ell \in \IZ_{\ge 0} \mid h \in \mathfrak{p}^\ell \}. 
	\end{equation*}
	
	\item 
	The {\em maximal order of $ X $} 
	is defined as
	\begin{equation*}
	\maxord (X)
	:= \sup\{  \ord_{x_{\mathfrak p}} (X) \mid x_{\mathfrak p}  \in X \}
	\end{equation*}
	and the {\em locus of maximal order of $ X $} is
	\begin{equation*}
	\Maxord(X) := 
	\{ x_{\mathfrak p}  \in X 
	\mid 
	\ord_{x_{\mathfrak p}} (X) = \maxord(X) \}.
	\end{equation*}
	\end{enumerate}
\end{defi}

Sometimes, we use the notation $ \maxord(h) := \maxord(X) $ or $ \Maxord(h) := \Maxord (X) $. 
Since the order is upper semi-continuous (see \cite[Chapter III, \S 3, Corollary 1, p.~220]{Hiro64}), the level sets 
$ \{ x_p \in X \mid  \ord_{x_{\mathfrak p}}(X) \geq a \} $, $ a \in \IZ_{\ge 0} $, are Zariski closed. 
In particular, this is true for $ \Maxord(X) $.

If $ X $ is a variety, which is not a hypersurface, then 
the order is not an appropriate measure for the complexity of the singularity, see \cite[Example~2.7]{FRS}.

\smallskip 


\begin{Lem}
	\label{Lem:Center_in_max_ord}
	Let $ g = \ux^A - \cc \ux^B \in K[\ux] $ be a binomial {such that $ \min \{ |A|,|B|\} \geq  1 $ and $ A_i B_i = 0 $ for all $ i \in \{ 1, \ldots, n \} $.} 
	{Let} $ I \subseteq \{ 1, \ldots, n \} $ be any subset. 
	Set $ X := V(g) $ 
	and $ D_I := V(x_i \mid i \in I ) $.  
	We have:
	\begin{enumerate}
		\item 
		$ \maxord(X) = \min\{ |A|, |B|\} $.
		
		\item 
		$ D_I \subseteq\Maxord(X) \Longleftrightarrow \min \big\{\sum_{i\in I} A_i, \sum_{i\in I} B_i \big\} =
		\min \big\{ |A|, |B| \big\}  $.  
	\end{enumerate}
\end{Lem}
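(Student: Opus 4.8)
The plan is to compute the order of $g = \ux^A - \cc\ux^B$ at various primes directly from the definition and then identify the maximal-order locus explicitly. The key observation is that $g$ is a binomial with $A_iB_i = 0$, so no variable divides $g$; this means the relevant orders will be governed by how many of the factors $x_i$ vanish at a given point. For part (1), I would first show $\maxord(X) \le \min\{|A|,|B|\}$ by exhibiting a point where the order is exactly $\min\{|A|,|B|\}$, and then argue no point can do worse. The natural candidate is the origin, i.e.\ the maximal ideal $\mathfrak{m} = \langle x_1,\ldots,x_n\rangle$: since $\ux^A \in \mathfrak{m}^{|A|}$ and $\ux^B \in \mathfrak{m}^{|B|}$, we get $g \in \mathfrak{m}^{\min\{|A|,|B|\}}$, and by comparing lowest-degree terms (the two monomials $\ux^A$ and $-\cc\ux^B$ cannot cancel since $A \ne B$, using $A_iB_i=0$) one sees $g \notin \mathfrak{m}^{\min\{|A|,|B|\}+1}$. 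This gives $\ord_{\mathfrak{m}}(g) = \min\{|A|,|B|\}$, hence $\maxord(X) \ge \min\{|A|,|B|\}$.

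The reverse inequality $\maxord(X) \le \min\{|A|,|B|\}$ requires showing that at \emph{every} prime $\mathfrak{p}$ the order is at most $\min\{|A|,|B|\}$. Here I would use that the order is insensitive to units: localizing at $\mathfrak{p}$, any $x_i \notin \mathfrak{p}$ becomes a unit, so $\ord_{\mathfrak p}(\ux^A)$ only counts contributions from those $i$ with $x_i \in \mathfrak{p}$, and $\ord_{\mathfrak p}(g) \le \min\{\ord_{\mathfrak p}(\ux^A), \ord_{\mathfrak p}(\ux^B)\} \le \min\{|A|,|B|\}$. The first inequality is just the standard fact that the order of a sum is at least the minimum of the orders, combined with the reverse bound coming from $\ord_{\mathfrak p}(\ux^A) = \sum_{x_i\in\mathfrak p}\ord_{\mathfrak p}(x_i)\cdot A_i \le |A|$ and similarly for $B$. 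This establishes (1).

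For part (2), the point is to translate the containment $D_I \subseteq \Maxord(X)$ into the stated arithmetic condition. The generic point $\mathfrak{p}_I$ of $D_I$ corresponds to the prime generated by $\{x_i \mid i \in I\}$, and at this prime exactly the variables indexed by $I$ vanish; computing as above gives $\ord_{\mathfrak{p}_I}(g) = \min\{\sum_{i\in I}A_i, \sum_{i\in I}B_i\}$. Since $\Maxord(X)$ is closed and $D_I$ is irreducible with generic point $\mathfrak{p}_I$, the containment $D_I \subseteq \Maxord(X)$ holds if and only if $\mathfrak{p}_I \in \Maxord(X)$, i.e.\ $\ord_{\mathfrak{p}_I}(g) = \maxord(X)$. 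Combining this with part (1) yields exactly $\min\{\sum_{i\in I}A_i,\sum_{i\in I}B_i\} = \min\{|A|,|B|\}$, which is the desired equivalence.

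The main obstacle I anticipate is making the order computations at a general prime fully rigorous, specifically the claim $\ord_{\mathfrak{p}}(g) = \min\{\sum_{x_i\in\mathfrak p}A_i,\sum_{x_i\in\mathfrak p}B_i\}$ rather than merely $\le$. The inequality $\ge$ could fail in principle if the two monomials have a common lower-order part that cancels, but the hypothesis $A_iB_i = 0$ prevents this: for each $i$, at most one of $A_i, B_i$ is nonzero, so after factoring out the common monomial (which is trivial here) the two leading forms in the associated graded ring remain distinct and do not cancel. I would need to argue this carefully, perhaps by passing to the associated graded ring of $\mathcal{O}_{\IA^n_K,\,x_{\mathfrak p}}$ and checking that the images of $\ux^A$ and $\ux^B$ in the appropriate graded piece are linearly independent. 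The reduction to the generic point of $D_I$ via upper semicontinuity (already cited in the excerpt) should handle the rest cleanly.
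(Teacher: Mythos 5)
Your argument is correct in substance, but it reaches the crucial upper bound $\maxord(X)\le\min\{|A|,|B|\}$ by a genuinely different route than the paper. The paper base-changes to an algebraic closure, uses the Nullstellensatz to reduce to maximal ideals $\langle x_1-c_1,\ldots,x_n-c_n\rangle$, expands $g$ in the translated variables, and observes that the term of degree $\min\{|A|,|B|\}$ contributed by $\ux^A$ (say) survives because the translated variables occurring in the two products are disjoint --- this is where $A_iB_i=0$ enters. You instead work directly at an arbitrary prime $\mathfrak{p}$: the variables lying in $\mathfrak{p}$ cut out a smooth coordinate subspace containing $V(\mathfrak{p})$, hence extend to a regular system of parameters of $\mathcal{O}_{\IA_K^n,x_{\mathfrak p}}$, and in the associated graded ring the initial forms of $\ux^A$ and $\cc\ux^B$ are unit multiples of \emph{distinct} monomials in these parameters (again by $A_iB_i=0$), so no cancellation occurs and $\ord_{\mathfrak{p}}(g)=\min\{\sum_{x_i\in\mathfrak p}A_i,\sum_{x_i\in\mathfrak p}B_i\}$. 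This is arguably cleaner: it avoids the base change, and it gives (1) and (2) from one computation, since applying it to the generic point of $D_I$ and using closedness of $\Maxord(X)$ is exactly the paper's argument for (2). The price is that you must actually verify that the $x_i\in\mathfrak{p}$ form part of a regular system of parameters (routine, but currently only announced as a to-do). One genuine slip to fix in the write-up: you invoke ``the order of a sum is at least the minimum of the orders'' to conclude $\ord_{\mathfrak p}(g)\le\min\{\ord_{\mathfrak p}(\ux^A),\ord_{\mathfrak p}(\ux^B)\}$, but that standard fact gives $\ge$, not $\le$; likewise it is the inequality $\le$, not $\ge$, that cancellation could destroy. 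The $\le$ is precisely the non-cancellation statement you later identify as the main obstacle, and your proposed fix via initial forms in the associated graded ring is the correct one --- so the gap is one of bookkeeping, not of ideas.
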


{The condition $ \min \{ |A|,|B|\} \geq  1 $ comes from the fact that we assume \eqref{eq:cond_not_loc_mon} to hold. 
	Notice that it is necessary, e.g., for $ g = x_1 x_2 - 1 $ the maximal order is $ 1 $ and not zero, which can be seen by computing the order at $ \langle x_1 - 1, x_2 - 1 \rangle $,
	cf.~Example~\ref{Ex:there_is_more_for_m>1}.}

\begin{proof}[Proof of Lemma~\ref{Lem:Center_in_max_ord}]
	{Let $ \mathfrak{m} := \langle x_1, \ldots, x_n \rangle $ be the maximal ideal corresponding to the origin.
	We have 
	$ \min\{ |A|, |B|\} =  \ord_{\mathfrak{m}} (g) \leq \maxord(X)  $.
	
	Suppose there is some prime ideal $ \mathfrak{p} \subset K[\ux] $ with $  \ord_{\mathfrak{p}} (g) >  \ord_{\mathfrak{m}} (g) $.
	This implies, 
	if we base change to an algebraic closure $ \overline{K} $ of $ K $, 
	then there is a maximal ideal $ \mathfrak{n} \subset \overline{K}[\ux] $ such that 
	$ \ord_{\mathfrak{n}}(g) > \ord_{\mathfrak{m}}(g) = \min\{ |A|, |B|\} $. 
	Since $ \overline{K} $ is algebraically closed and $ \mathfrak{n} $ is a maximal ideal, 
	there are $ c_1, \ldots, c_n \in \overline{K} $ such that $ \mathfrak{n} = \langle x_1 - c_1, \ldots, x_n - c_n  \rangle $ by Hilbert's Nullstellensatz.
	  
	Set $ I_1 := \{ i \in \{ 1, \ldots, n \} \mid A_i \neq 0 \} $ and $ I_2 := \{ i \in \{ 1, \ldots, n \} \mid B_i \neq 0 \} $ . 
	Since $ A_i B_i = 0 $ for all $ i \in \{ 1, \ldots, n \} $, we have $ I_1 \cap I_2 = \varnothing $ and it makes sense to define 
	$ y_i := x_i - c_i $ for $ i \in I_1 $ and $ z_i := x_i - c_i $ for $ i \in I_2 $. 
	This provides
	\[
		\ux^A - \rho \ux^B
		=  \prod_{i \in I_1} (y_i + c_i)^{A_i} - \rho \prod_{i \in I_2} (z_i + c_i)^{B_i}
		= \sum_{\alpha \in \IZ_{\geq 0}^{|I_1|}} \lambda_\alpha (\underline{c}) \underline{y}^\alpha -  \sum_{\beta \in \IZ_{\geq 0}^{|I_2|}} \mu_\beta (\underline{c}) \underline{z}^\beta ,
	\]
	where the coefficients $ \lambda_\alpha (\underline{c}), \mu_\beta (\underline{c}) \in \overline{K} $ fulfill $ \lambda_{A}(\underline{c}) = 1 $, $ \mu_B(\underline{c}) = \rho $ and $ \lambda_\alpha (\underline{c}) = \mu_\beta (\underline{c}) = 0 $ if $ |\alpha| \geq |A| $ and $ \alpha \neq A $, resp.~if $ |\beta| \geq |B| $ and $ \beta \neq B $ 
	(and we use the obvious notation $ \underline{c}, \underline{y}, \underline{z} $).
	In order to have $ \ord_{\mathfrak{n}}(g) > \min\{ |A|,|B|\} $, 
	all terms $ y^C z^D $ with $ |C|+|D| \leq  \min\{ |A|,|B|\}  $ have to cancel out.  
	This is impossible since the variables appearing in the products are disjoint and $ \lambda_{A}(\underline{c}) \mu_B(\underline{c}) \neq 0 $. 
	Thus, we arrived to a contradiction and (1) follows.
		
	Let us come to part (2). 
	Set $ d := \maxord(X) = \min \{ |A|,|B| \}$. 
	The condition $ D_I = V(x_i \mid i \in I )\subseteq\Maxord(X) $ is equivalent to 
	\[ 
		\ux^A - \rho \ux^B \in \langle x_i \mid i \in I \rangle^d \setminus  \langle x_i \mid i \in I \rangle^{d+1} .
	\]
	The latter is equivalent to $ \sum_{i\in I} A_i \geq d $ and $ \sum_{i\in I} B_i \geq d $, and equality has to hold for one of them.
	Hence, (2) follows.}
\end{proof}

\begin{example}
	Let $ g = x_1^3 x_2   - x_3^3 x_4^4 $ and set $ X := V(g) \subset \IA_K^4 $.
	Using the previous lemma, we get that $ \maxord(X) = 4 $,
	$ V(x_1, x_2, x_4) \subseteq\Maxord (X) $, 
	while $ V (x_1, x_2, x_3) \not\subseteq\Maxord(X) $.
	
	Let us blow up with center $ D := V(x_1, x_2, x_3 , x_4 ) \in \Maxord(X) $, the origin of $ \IA_K^4 $.
	In the $ X_3 $-chart, we have $ (x_1, x_2, x_3, x_4) =  (x_1'x_3', x_2'x_3', x_3', x_3'x_4') $
	and the total transform of $ g $ is   
	$ x_3'^4 (x_1'^3 x_2' - x_3'^3 x_4'^4) $.
	We obtain essentially the same binomial and no improvement is detected. 
	The reason for this is that the center has been chosen too small.  
	
	We leave it as an exercise to the reader 
	to verify that the maximal order decreases at every chart after blowing up with center $ V (x_1,x_2,x_4) $.
\end{example}

Let us now describe the method for choosing the center for a binomial 
using the locus of maximal order.

\begin{Constr}
	\label{Constr:center_max_ord}
	Let $ f = \ux^C (\ux^A - \cc \ux^B) \in K[\ux] $
	with $ \cc \in K^\times $ and $ A, B, C \in \IZ_{\ge 0}^n $ 
	such that $ A_i B_i = 0 $ for all $ i \in \{ 1, \ldots, n \} $.
	Set $ g : = \ux^A - \cc \ux^B $.
	Assume that hypothesis \eqref{eq:cond_not_loc_mon} holds.
	We choose $ I \subseteq \{ 1, \ldots, n \} $ such that 
	\[ 
 		\min \big\{\sum_{i\in I} A_i, \sum_{i\in I} B_i \big\} =
	\min \big\{ |A|, |B| \big\}  
	\]
	and we require additionally that
	\begin{equation}
	\label{eq:center_not_too_small}
		\forall \, j \in I 
		\, : \,
		\min \big\{\sum_{i\in I\setminus \{ j \}} A_i, \sum_{i\in I\setminus \{ j \}} B_i \big\} <
		\min \big\{ |A|, |B| \big\}.  
	\end{equation}

	Then, the center for the next blowup is $ D_I  := V (x_i \mid i \in I ) $.
\end{Constr}

By Lemma~\ref{Lem:Center_in_max_ord}, the center $ D_I $ is contained in the maximal order locus of $ V(g) $.
On the other hand, \eqref{eq:center_not_too_small} guarantees that $ D_I $ is not too small so that an improvement can be detected.

\begin{example}\label{ex1:maxord}\label{running_example1}
	Let $ f =  \ux^A - \ux^B = x_1^3 x_2^2 - x_3^5 x_4 \in K[x_1,x_2,x_3,x_4] $.
	Since $ \min \{ |A| , |B| \} = |A| =  5 $, we have 
	$ \{ 1, 2 \} \subseteq I $ for every $ I \subseteq\{ 1, \ldots, 4 \} $ fulfilling the conditions of Construction~\ref{Constr:center_max_ord}.
	Furthermore, $ I' := \{ 1, 2, 3, 4 \} $ does not fulfill \eqref{eq:center_not_too_small} for $ j = 4 $. 
	Therefore, the unique center determined by Construction~\ref{Constr:center_max_ord} is $ V (x_1, x_2, x_3 ) $.
\end{example}

Clearly, the subset $ I \subseteq\{ 1, \ldots, n \} $ is not unique  in general and we may have to make a choice, 
as the following example shows. 
As explained in the introduction, 
we do not require that our procedure provides a global monomialization of $ V(f) $. Therefore we may allow to make choices as long as we can prove the termination
of the resulting procedure (Proposition~\ref{Prop:max_ord_terminates}).

\begin{example}
	\label{Ex:max_ord_choice}
	Let $ f =  x_1 x_2^2 - x_3^3 x_4^2 x_5 \in K[x_1, \ldots, x_5] $.
	Since no $ x_i $ can be factored in $ f $, we have $ g = f  $.
	The maximal order of $ g $ is three and 
	$ D_1 := V (x_1, x_2, x_3) $ and $ D_2 := V (x_1, x_2, x_4, x_5) $ are 
	the possible choices for the center following Construction~\ref{Constr:center_max_ord}.

	\begin{enumerate}
		\item 
		Blow up with center $ D_1 $.
		In the $ X_3 $-chart, we have 
		\[
			(x_1, x_2, x_3, x_4, x_5) =  (x_1'x_3', x_2'x_3', x_3', x_4', x_5') .
		\] 
		Hence, the total transform of $ f $ is $ f = x_3'^3 ( x_1' x_2'^2 - x_4'^2 x_5' ) =  x_3'^3 g' $,
		where we define $ g' :=  x_1' x_2'^2 - x_4'^2 x_5' $. 
		(Note that $ g' $ fulfills the property that no $ x_i' $ divides $ g' $.)
		We have $\maxord(g) = 3 = \maxord(g')$ and $|B'| = 3 = |A| < |B| = 6$.
		
		\item 
		Blow up with center $ D_2 $. In the $ X_4 $-chart, we have 
		\[  
			(x_1, x_2, x_3, x_4, x_5) =  
			( \widetilde x_1 \widetilde x_4, \widetilde x_2 \widetilde x_4, \widetilde x_3 , \widetilde x_4, \widetilde x_4 \widetilde x_5).
		\]
		(For a better distinction to (1), we use $ \widetilde * $ instead of $ *' $ for the coordinates here.) 
		The total transform of $ f $ is $ f  = \widetilde x_4^3 ( \widetilde x_1 \widetilde x_2^2 - \widetilde x_3^3 \widetilde x_5 ) $. 
		Thus, we set $ \widetilde g := \widetilde x_1 \widetilde x_2^2 - \widetilde x_3^3 \widetilde x_5 $. 
		We get $\maxord(\widetilde g) = 3 = \maxord(g) $ and $ |\widetilde B| = 4   < 6 = |B| $.
		(The situation in the $ X_5 $-chart is analogous.)
		
	\end{enumerate} 

	On the other hand, in both cases, one can show that 
	the maximal order is strictly smaller than three if we consider the $ X_1 $- or the $ X_2 $-chart.
\end{example}

For the general case, we have to introduce a measure which detects the improvement.

\begin{defi}
	\label{Def:inv}
	Let $ g = \ux^A - \cc \ux^B \in K[\ux] $
	with $ \cc \in K^\times $ and $ A, B \in \IZ_{\ge 0}^n $ 
	such that $ A_i B_i = 0 $ for all $ i \in \{ 1, \ldots, n \} $.
	We define
	\[
		\inv(g) := \big(  \, \min\{ |A|, |B| \}, \, \max \{ |A|, |B| \} \, \big) \in \IZ_{\ge 0}^2 .
	\]
	Here, we equip $ \IZ_{\ge 0}^2 $ with the lexicographical ordering $ \ge_\lex $.
\end{defi}

In the above example, we have 
$ \inv(g) = (3, 6) $, $ \inv(g') = (3,3) <_\lex  \inv(g) $,
and $ \inv(\widetilde g) = (3,4) <_\lex \inv(g) $.

In fact,
$ \inv(g) = (\maxord(g) , \maxord(g) \cdot \delta(g)) $, where $ \delta(g) $ is a known 
secondary invariant to measure the complexity of a given singularity, e.g., 
see \cite[p.~120, where it is called $ \gamma $]{Hiro_Bowdoin}, \cite[Theorem~3.18]{InvDim2} or \cite[Corollary~5.1]{CJSc}.

\begin{Prop}
	\label{Prop:max_ord_terminates} 
	Let $ f = \ux^C (\ux^A - \cc \ux^B) \in K[\ux] = K [x_1, \ldots, x_n]$
	with $ \cc \in K^\times $ and $ A, B, C \in \IZ_{\ge 0}^n $ 
	such that $ A_i B_i = 0 $ for all $ i \in \{ 1, \ldots, n \} $.
	Set $ g : = \ux^A - \cc \ux^B $.
	Let $ \pi \colon \Bl_{D_I} (\IA_K^n) \to \IA_K^n $ be the blowup in a center 
	$ D_I $, which fulfills the properties as in Construction~\ref{Constr:center_max_ord}. 
	For every standard chart $ U_{x_j} := D_+ (X_j) \cong \IA_K^n $,
	$ j \in I $, we have 
	\[
		\inv(g') <_\lex \inv(g),
	\]
	where $ f = \ux'^{C'} (\ux'^{A'} - \cc \ux'^{B'}) \in K[\ux']  $ with $ A_i' B_i' = 0 $ for all $ i \in \{ 1, \ldots, n \} $, 
	$ g' := \ux'^{A'} - \cc \ux'^{B'} $ is the {\em strict transform} of $ g $,
	and $ (\ux') = (x_1', \ldots, x_n' ) $ are the coordinates in $ U_{x_j} $.  
	
	In particular, the local monomialization process obtained by choosing the centers as in Construction~\ref{Constr:center_max_ord} terminates.
\end{Prop}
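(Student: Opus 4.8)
The plan is to prove the displayed inequality $\inv(g') <_\lex \inv(g)$ one chart at a time, and then to deduce termination from the fact that $(\IZ_{\ge 0}^2, \ge_\lex)$ is well-ordered together with the finiteness of the branching at each blowup. Since $\inv$ and both defining conditions of Construction~\ref{Constr:center_max_ord} are symmetric in $A$ and $B$, I would assume without loss of generality that $\min\{|A|,|B|\} = |A|$, so that $\inv(g) = (|A|,|B|)$ and, by the center condition, $\delta = \min\{\sum_{i\in I}A_i, \sum_{i\in I}B_i\} = |A|$.

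First I would read off the precise numerics of the transform from Observation~\ref{Obs:bu}. Fix $j \in I$ and work in $U_{x_j}$. Because $\sum_{i\in I}A_i \le |A|$ always holds, the center condition $\min\{\sum_{i\in I}A_i, \sum_{i\in I}B_i\} = |A|$ forces $\sum_{i\in I}A_i = |A|$ and $\sum_{i\in I}B_i \ge |A|$. Hence $A'_j = \sum_{i\in I}A_i - \delta = 0$, while $A'_i = A_i$ and $B'_i = B_i$ for $i \ne j$ and $B'_j = \sum_{i\in I}B_i - |A| \ge 0$. This yields the two key identities
\[
|A'| = |A| - A_j, \qquad |B'| = |B| - |A| + \sum_{i \in I \setminus \{j\}} B_i .
\]

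Then I would run a case distinction on $A_j$. If $A_j > 0$ (so $B_j = 0$ since $A_iB_i = 0$), then $|A'| = |A| - A_j < |A|$, whence $\min\{|A'|,|B'|\} \le |A'| < |A| = \min\{|A|,|B|\}$ and the first coordinate of $\inv$ already strictly drops. If $A_j = 0$, then $|A'| = |A|$, so the gain must come from the second coordinate, and this is exactly the step where condition \eqref{eq:center_not_too_small} is indispensable; I expect this to be the main point to pin down. Indeed, $A_j = 0$ gives $\sum_{i\in I\setminus\{j\}}A_i = |A|$, so \eqref{eq:center_not_too_small} for this $j$ reads $\min\{|A|, \sum_{i\in I\setminus\{j\}}B_i\} < |A|$, which forces $\sum_{i\in I\setminus\{j\}}B_i < |A|$. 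Substituting into the identity above gives $|B'| < |B|$. Comparing $\big(\min,\max\big)$ of $\{|A|,|B'|\}$ against $(|A|,|B|)$ then yields $\inv(g') <_\lex \inv(g)$ in both sub-cases: if $|B'| \ge |A|$ the second coordinate drops, and if $|B'| < |A|$ the first coordinate drops.

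Finally, for termination I would observe that, after factoring out the exceptional monomial, $g'$ is again of the admissible form with $A'_iB'_i = 0$, so $\inv$ is defined at every node and the argument iterates. Each non-final chart produces only finitely many successor charts (one per element of the center index set), and along every branch $\inv$ strictly decreases in the well-ordered set $(\IZ_{\ge 0}^2, \ge_\lex)$; therefore no branch can be infinite, and a finitely branching tree with no infinite branch is finite by K\"onig's lemma. This gives termination of the procedure of Construction~\ref{Constr:center_max_ord}.
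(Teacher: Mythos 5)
Your proposal is correct and follows essentially the same route as the paper: the same reduction to $|A|\le|B|$, the same transformation formulas for $|A'|$ and $|B'|$, the same use of \eqref{eq:center_not_too_small} to force $\sum_{i\in I\setminus\{j\}}B_i<|A|$ and hence $|B'|<|B|$ in the critical chart, and the same well-foundedness argument for termination. The only (cosmetic) difference is that you split on $A_j>0$ versus $A_j=0$ rather than on whether $x_j$ lies in the support of $\ux^A$ or of $\ux^B$, and you make the appeal to K\"onig's lemma explicit where the paper leaves it implicit.
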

	

\begin{proof}
	If hypothesis \eqref{eq:cond_not_loc_mon} does not hold for $ f $, 
	then $ f $ is already locally monomial and there is nothing to show.
	Hence, suppose that \eqref{eq:cond_not_loc_mon} holds. 
	Without loss of generality, we have $|A| \le |B|$. 
	After relabeling the variables $(x_1,\ldots,x_n)$ 
	we may assume that 
	\[ 
		\{ i \in \{ 1, \ldots , n\} \mid A_i \neq 0 \} 
		= \{ 1, \ldots, m \} , 
	\]
	for some $ m < n $. 
	Thus, we have $ \{ 1, \ldots, m \} \subseteq I $,
	i.e., $ D_I \subseteq  V(x_1, \ldots , x_m) $.
	
	Let us consider the $ X_i $-chart of the blowup with center $ D_I $.
	We distinguish two cases, $ i \leq m $ and $ i > m $.
	
	Assume that $ i\le m$, i.e., $A_i \neq 0$.
	Using the notation of the proposition, we have
 	\[ 
 		g' = \ux'^{A} (x_i')^{-A_i} - \cc \ux'^{B} (x_i')^{|B_I|-|A|} \in K[\ux'] ,
 	\]
 	where $ |B_I| := \sum_{j\in I } B_j $.
 	Notice that $ |B_I|-|A| \geq 0 $. 
	Our hypothesis $ |A| \leq |B| $ implies that $\inv(g') = (|A|-A_i, |B|+|B_I|-|A|)$ in this case.
	Since $A_i \neq 0$, we have $|A|>|A|-A_i$ and hence $\inv(g')<_\lex \inv(g)$.
	
	Now, suppose that $i \in \{m+1,\ldots n\}$, i.e., $B_i\neq 0$.
	We get 
	\[  
		g' =  \ux'^{A'} - \cc \ux'^{B'}  
		=  \ux'^{A} - \cc \ux'^{B} x_i'^{|B_I|-|A|-B_i} \in K [\ux'].
	\]
	First, we observe that $ \min \{ |A'|, |B'| \} \leq |A'| = |A| $. 
	If the inequality is strict, we obtain $ \inv(g') <_\lex \inv(g) $ as desired.
	Hence, let us assume that  $ \min \{ |A'|, |B'| \} = |A'| = |A| $. 
	The claim follows if we can show $|B'| < |B|$.
	By \eqref{eq:center_not_too_small} and the hypothesis $ |A|\leq |B| $, 
	we have that $|B_I|-B_i<|A|$ and therefore we get
	$|B'| = |B| +|B_I| -|A| - B_i < |B|$ and in particular $\inv(g')<_\lex \inv(g)$. 
	
	Since the improvement of $ \inv(.) $ is strict and since 
	$ \inv(.) $ takes values in $ \IZ_{\ge 0}^2 $, 
	the local monomialization procedure using centers of the kind in Construction~\ref{Constr:center_max_ord} 
	ends after finitely many steps.	
\end{proof}

Given a binomial in $ K [\ux] = K [x_1, \ldots, x_n ] $, 
we provide in Algorithm~\ref{cc4} a method to determine a subset $ I \subseteq \{ 1, \ldots, n \} $ fulfilling the conditions of
Construction~\ref{Constr:center_max_ord}.  
Therefore, $ V ( x_i\,|\, i \in I ) $ will be our center contained in the maximal order locus of the binomial.

\begin{algorithm}
	\caption{compute center (in the locus of maximal order)}
	\begin{algorithmic}[1]
		\INPUT list $ M $, $\mbox{mode}= 1 $,
		where $ M $ is of the same form as $ L[i] $ in Remark~\ref{Rk:Algo1}
		\OUTPUT $ I \subseteq \{ 1, \ldots, n \} $ such that $ V ( x_i \,|\, i \in I ) $ is the next center in the monomialization process

		\State $ (A,B,C,E) = M[1] $
		\State  $ I = \varnothing  $, $ J = \varnothing $ 
		\State $ \mathcal A = \{ i \mid  A_i > 0 \} $, $ \mathcal B = \{ i \mid B_i > 0 \} $
		\If{$|A|<|B|$}
		\State  $ I = \mathcal A $
		\For{$ b \in \mathcal B$}
		\State $ J = J \cup \{b\}$
		\If{$\sum_{j \in J } B_j \ge |A|$}
		\State \textbf{break}
		\EndIf
		\EndFor
		\For{$ i \in J$}
		\If{$\sum_{j \in J \setminus  \{i\}}B_j \ge |A|$}
		\State $ J = J \setminus \{i\}$
		\EndIf
		\EndFor
		\State $I = I \cup J $
		\State \Return $ I $
		
		\ElsIf{$|B|<|A|$}
		\State  $ I = \mathcal B $
		
		\For{$ a \in \mathcal A $}
		\State $ J = J \cup \{a\}$
		\If{$\sum_{j \in J } A_j \ge |B|$}
		\State \textbf{break}
		\EndIf
		\EndFor
		\For{$ i \in J$}
		\If{$\sum_{j \in J \setminus  \{i\}} A_j \ge |B|$}
		\State $ J = J \setminus \{i\}$
		\EndIf
		\EndFor
		\State $I = I \cup J $
		\State \Return $ I $
		
		\ElsIf{$|A|=|B|$}
		\State $ I = \mathcal A \cup \mathcal B $
		\State\Return $ I $
		\EndIf
	\end{algorithmic}
	\label{cc4}
\end{algorithm}

\begin{Rk}[Algorithm~\ref{cc4}] 
	\label{Rk:Algo_cc4}
	The {\em input}  
	is:
	\begin{itemize}
		\item 
		a list $ M $,
		which represents the data of a chart
		and hence is of the same form as $ L[i] $ in 
		Remark~\ref{Rk:Algo1}
		
		\item 
		the integer $ \mbox{mode} = 1  $,
		which tells us to choose the center as in Construction~\ref{Constr:center_max_ord};
	\end{itemize}  
	The {\em output} of Algorithm~\ref{cc4} is the index set $ I \subseteq \{ 1, \ldots, n \} $ determining the next center for the monomialization procedure.
	
	First, we initialize the data (lines 1--3).
	We introduce the exponents of the binomial $ \ux^C ( \ux^A - \rho \ux^B ) $ in the given chart. 
	Further, we introduce two auxiliary sets $ I $ and $ J $, where $ I $ will become the output set. 
	
	Then we perform a case distinction depending on whether 
	$ |A| < |B| $ (lines 4--14), or $ |A|>|B| $ (lines 15--25), or $ |A|=|B| $ (lines 26--28).
	
	Suppose $ |A|<|B| $.	
	Then, the maximal order of $ \ux^A - \rho \ux^B $ is $ |A| $
	and every $ i $ with $ A_i > 0 $ contributes to the index set of the center (line 5).
	After that, we sum up the elements of $ B $
	until the resulting sum is $ \geq |A|$ (lines 6--9).
	Within this, we collect in $ J $ the indices $ j $ with $ B_j > 0 $ appearing in the sum.
	At this moment, the index set $ \widetilde I := I \cup J $ fulfills the first condition 
	of Construction~\ref{Constr:center_max_ord}, 
	$  
	\min \big\{\sum_{i\in \widetilde I} A_i, \sum_{i\in \widetilde  I} B_i \big\} =
	\min \big\{ |A|, |B| \big\}  .
	$
	But the second condition \eqref{eq:center_not_too_small} does not necessarily hold,
	i.e., the number of elements in $ \widetilde I $ might be too large.
	Hence, we remove step-by-step elements from $ J $, without destroying first condition, 
	until \eqref{eq:center_not_too_small} holds (lines 10--12).
	
	The case $|A|>|B| $ is analogous, we only have to interchange the role of $ A $ and $ B $.
	Finally, if $ |A|=|B| $, all variables appearing in the binomial with non-zero exponent have to be contained in the ideal of the center. 
\end{Rk}

Clearly, the choice of center depends on the ordering of the variables.
In Example~\ref{Ex:max_ord_choice}, we obtain the center $ D_1 $ if we choose the ordering $ (x_1, x_2, x_3, x_4, x_5) $,
while we get $ D_2 $ for the ordering $ (x_1, x_2, x_4, x_5, x_3) $
(using the notation of the example).

\smallskip 

\section{Centers of codimension two}
\label{theory:codim2}

Let us come to the second method for choosing the center.
Recall that $ f = \ux^C (\ux^A - \cc \ux^B) $ and $ g = \ux^A - \cc \ux^B $ such that no $ x_i $ can be factored from $ g $. 
In contrast to the previous method, 
we may neglect the connection to the singularities of $ V(g) $ and choose centers of minimal codimension. 
This has the benefit that we reduce the number of charts which we have to control after a single blowup. 
Hence, the idea is to take $ i , j \in \{ 1, \ldots, n \} $ such that $ A_i \neq 0 $ and $ B_j \neq 0 $, which provides the center $ D = V (x_i, x_j ) $.
An additional requirement, which we shall need in order to detect an improvement after the blowup, is that the exponents $ A_i $ and $ B_j $ are maximal among the possible choices.

\begin{example}\label{ex1:codim2}(cf.~Example~\ref{running_example1}) \label{running_example2}
	Let $X=V(f)$ be the  hypersurface described by the binomial
	$ 
	f=x_1^3x_2^2-x_3^5x_4 \in K[x_1,x_2,x_3,x_4].
	$ 
	The maximal exponents appearing in the monomials are 
	$ A_1 = 3 $ and $ B_3 = 5 $.
	Therefore, we will choose $ D = V(x_1,x_3) $ as the center for the 
	next blowup.\\
	In comparison to Example~\ref{ex1:maxord} we see that we can reduce the 	number of successor charts by choosing a center of codimension 2.
\end{example}

Note that  the maximal order is not an appropriate measure to detect the improvement along a blowup of the given type. 

\begin{example}
	\label{Ex:codim_2_maxord_increases}
	Let $ f = g = x_1^a x_2^a x_3^a - x_4^b \in K[x_1, x_2, x_3, x_4] $ with $ a, b \in \IZ_{\ge 2} $ such that $ a < b < 2a $.
	(For example, take $ a = 3 $ and $ b = 5 $).
	Observe that $ \maxord(g) = b $. 
	We choose the center $ V(x_1, x_4) $.  
	In the $ X_1 $-chart, we have 
	\[ 
		(x_1 ,x_2, x_3, x_4 ) = (x_1', x_2, x_3, x_1'x_4')
	\] 
	and the total transform of $ f $ is $ f = x_1'^a (x_2^a x_3^a - x_1'^{b-a}x_4'^b) = x_1'^a g' $, where we define 
	\[ 
		g' := \ux'^{A'} - \ux'^{B'} := x_2^a x_3^a - x_1'^{b-a}x_4'^b .
	\]  
	Since $ a < b $, we have $|B'| = 2b-a > b $,
	while $ b < 2a $ implies $|A'|= 2a > b$.   
	Therefore, we have $ \maxord(g') > \maxord(g) $,
	which is not a surprise since the center $ V(x_1, x_4) $ is not contained in $ \Maxord(g) $. 
	
	Note that the same happens if we blow up one of the other 
	reasonable centers of codimension two, which are $ V(x_2,x_4) $ and $ V(x_3,x_4) $. 
\end{example}

We use $ \iota(g) = \left( \alpha(g), \mathfrak{a}(g), \beta(g), \mathfrak{b}(g)\right) \in \mathbb{Z}_{\ge 0}^4 $
	of Definition~\ref{Def:Invariant_codim_2} to deduce the termination for the present method of monomialization.
In Example~\ref{Ex:codim_2_maxord_increases}, we have 
$ \iota(g') = (a,2, b, 1) <_\lex (a,3,b,1) = \iota(g) $. 
In general, not any codimension two center provides an improvement of $ \iota(.) $.

\begin{example}
	Consider the binomial $ f = g = x_1^a - x_2^b x_3^c \in K[x_1, x_2, x_3] $ 
	with $ a, b, c \in \IZ_{\ge 2} $ and $ b \leq c $. 
	We have
	\[
		\iota(g) = \left\{
		\begin{array}{cl}
		(a,1,c,1), & 
		\mbox{if } b < c, 
		\\[3pt]
		(a,1,c,2), & 
		\mbox{if } b = c.
		\end{array}  
		\right.
	\] 
	Suppose that $ b < c $. 
	Let us blow up with center $ V(x_1, x_2) $,
	which does not fulfill the additional hypothesis that the corresponding exponents in $ g $ are maximal.
	For simplicity, we define $ m := \min \{ a, b \} $. 
	In the $X_2$-chart, the total transform of $ f $ is $ f= x_2'^{m} (x_1'^a x_2'^{a-m} - x_2'^{b-m}x_3^c )$, which provides
	$ g' = x_1'^a x_2'^{a-m} - x_2'^{b-m}x_3^c $ and 
	$ \iota(g') = (a,1,c,1) = \iota(g) $. 
\end{example}

In order to guarantee a decrease of $ \iota(.) $, 
we choose the center as follows:

\begin{Constr}
	\label{Constr:center_codim2}
	Let $ f = \ux^C (\ux^A - \cc \ux^B) \in K[\ux] $
	with $ \cc \in K^\times $ and $ A, B, C \in \IZ_{\ge 0}^n $ 
	such that $ A_i B_i = 0 $ for all $ i \in \{ 1, \ldots, n \} $.
	Set $ g : = \ux^A - \cc \ux^B $.
	Assume that hypothesis \eqref{eq:cond_not_loc_mon} holds.
	If $ \iota(g) = (\alpha,\mathfrak a, \beta, \mathfrak b) \in \IZ_{\ge 0}^4 $,
	then we choose $ j_1, j_2 \in \{ 1, \ldots, n \} $ such that  
	$ A_{j_1} = \alpha $ and $  B_{j_2} = \beta $.
	The center for the next blowup is then $ D_{I} = V(x_{j_1},x_{j_2}) $, for $ I = \{ j_1, j_2 \} $.
\end{Constr}

Already in Example~\ref{Ex:codim_2_maxord_increases},
we have seen that the center described in the above construction is not unique. 
But, analogous to Proposition~\ref{Prop:max_ord_terminates}, we can prove the following result.

\begin{Prop}
	\label{Prop:codim2_terminates} 
	Let $ f = \ux^C (\ux^A - \cc \ux^B) \in K[\ux] = K [x_1, \ldots, x_n]$
	with $ \cc \in K^\times $ and $ A, B, C \in \IZ_{\ge 0}^n $ 
	such that $ A_i B_i = 0 $ for all $ i \in \{ 1, \ldots, n \} $.
	Set $ g : = \ux^A - \cc \ux^B $.
	Let $ \pi \colon \Bl_{D_I} (\IA_K^n) \to \IA_K^n $ be the blowup in a center 
	$ D_I $, which fulfills the properties as in Construction~\ref{Constr:center_codim2}. 
	For every standard chart $ U_{x_j} := D_+ (X_j) \cong \IA_K^n $,
	$ j \in I $, we have 
	\[
	\iota(g') <_\lex \iota(g),
	\]
	where $ f = \ux'^{C'} (\ux'^{A'} - \cc \ux'^{B'}) \in K[\ux']  $ with $ A_i' B_i' = 0 $ for all $ i \in \{ 1, \ldots, n \} $, 
	$ g' := \ux'^{A'} - \cc \ux'^{B'} $,
	and $ (\ux') = (x_1', \ldots, x_n' ) $ are the coordinates in $ U_{x_j} $.  
	
	In particular, the local monomialization process obtained by choosing the centers as in Construction~\ref{Constr:center_codim2} terminates.
\end{Prop}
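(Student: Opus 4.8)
The plan is to mimic the structure of the proof of Proposition~\ref{Prop:max_ord_terminates}, but now tracking the four-component invariant $\iota(g) = (\alpha,\mathfrak a,\beta,\mathfrak b)$ under the lexicographic order rather than $\inv(g)$. By Construction~\ref{Constr:center_codim2}, the center is $D_I = V(x_{j_1},x_{j_2})$ with $A_{j_1}=\alpha$ maximal among the $A_i$ and $B_{j_2}=\beta$ maximal among the $B_i$. Since $A_iB_i=0$ for all $i$, the indices $j_1$ and $j_2$ are distinct, and $x_{j_1}$ appears only in the first monomial while $x_{j_2}$ appears only in the second. There are exactly two charts to consider, $U_{x_{j_1}}$ and $U_{x_{j_2}}$, and by the obvious symmetry in the roles of $A$ and $B$ it suffices to analyze one of them, say $U_{x_{j_1}}$; the other is handled by interchanging the roles of the two monomials. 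So first I would fix notation and reduce to the single chart $j = j_1$.

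\textbf{The chart computation.} In the $X_{j_1}$-chart, Observation~\ref{Obs:bu} (specialized to $I=\{j_1,j_2\}$) gives the transformed exponents. Writing $s_A := A_{j_1}+A_{j_2} = \alpha$ (since $B_{j_2}\ne 0$ forces $A_{j_2}=0$) and $s_B := B_{j_1}+B_{j_2} = \beta$ (since $A_{j_1}\ne 0$ forces $B_{j_1}=0$), the quantity $\delta = \min\{\alpha,\beta\}$ is subtracted from the $x_{j_1}'$-exponent of whichever monomial realizes the minimum, after the center-variable exponents collapse into position $j_1$. The key observation to extract is how the maximal first-monomial exponent $\alpha$ changes: in the $X_{j_1}$-chart the new $A'$ at position $j_1$ becomes $\alpha - \delta \le \alpha - 1$ when $\beta \ge 1$ (equivalently $\delta \ge 1$, which holds since $\min\{|A|,|B|\}\ge 1$), while all other entries $A_i'$ with $i\ne j_1$ are unchanged and were $\le \alpha$. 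Thus the new maximum of the $A$-exponents is at most $\alpha$, and I would argue case-by-case: either the new $\alpha(g')$ strictly drops below $\alpha$, giving $\iota(g')<_\lex\iota(g)$ immediately; or $\alpha(g')=\alpha$ is still attained, in which case the index $j_1$ no longer attains it (its value fell), so the multiplicity count $\mathfrak a(g')$ decreases by the contribution of $j_1$ while no new maximal index is created, yielding a strict drop in the second coordinate. The symmetric chart $X_{j_2}$ decreases $\beta$ or $\mathfrak b$ by the same reasoning applied to the $B$-side.

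\textbf{Anticipated obstacle and conclusion.} The main subtlety is the refactorization step: after the naive transform one must divide the binomial by the common factor $\ux'^{\min\{\cdots\}}$ to recover the normalized form with $A_i'B_i'=0$, and I must check that this subtraction of $\delta$ lands on the correct monomial and does not accidentally raise some other exponent above $\alpha$ (or $\beta$). Since the collapse only affects position $j_1$ and the subtraction of $\delta$ only decreases exponents, no entry can exceed its old bound, so this concern resolves cleanly, but it deserves explicit verification, particularly the edge cases where $\alpha=\beta$ (so $\delta=\alpha=\beta$ and one monomial's center-exponent vanishes entirely). After establishing the strict lexicographic decrease in every chart, termination follows exactly as in Proposition~\ref{Prop:max_ord_terminates}: $\iota(.)$ takes values in $\IZ_{\ge 0}^4$, which is well-ordered under $<_\lex$ on the relevant bounded region, so no infinite strictly decreasing sequence exists and the monomialization procedure halts after finitely many blowups.
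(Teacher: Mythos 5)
Your proposal is correct and follows essentially the same route as the paper's proof: reduce to the two charts $U_{x_{j_1}}$, $U_{x_{j_2}}$, observe that in the chart of the $A$-side variable the pair $(\alpha,\mathfrak a)$ drops lexicographically while in the other chart $(\alpha,\mathfrak a)$ is unchanged and $(\beta,\mathfrak b)$ drops, and conclude termination from the well-ordering of $(\IZ_{\ge 0}^4,<_\lex)$. The only cosmetic difference is that the paper normalizes to $\alpha\le\beta$ and writes the transforms explicitly, whereas you work uniformly with $\delta=\min\{\alpha,\beta\}\ge 1$; both arguments carry the same content.
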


\begin{proof}
	We may assume $ j_1 = 1 $ and $ j_2 = 2 $
	after relabeling the variables $ (x_1, \ldots, x_n) $. 
	Hence, the center is $ V(x_1, x_2) $. 
 	Since the exponents $ A_+ := (A_3, \ldots, A_n) $ and $ B_+ := (B_3, \ldots, B_n) $ are not changed by the blowup, 
 	we use the abbreviation
 	\[
 		g = x_1^{\alpha} \ux_+^{A_+} - \cc x_2^{\beta} \ux_+^{B_+},
 	\]
 	where $ \iota(g) = (\alpha,\mathfrak a, \beta, \mathfrak b)  $ and $ \ux_+ := (x_3, \ldots, x_n) $.
 	Without loss of generality, we assume $\alpha \le \beta$. 
 	
 	In the $X_1$-chart, the total transform of $ f $ provides 
 	$g'= \ux'^{A'} - \cc \ux'^{B'}  := 
 	\ux_+'^{A_+} - \cc x_1'^{\beta-\alpha}x_2'^{\beta} \ux_+'^{B_+}$.
 	Therefore, $\alpha(g') < \alpha = \alpha(g) $, if $\mathfrak a = \mathfrak a (g) = 1$, or 
 	$ ( \alpha(g'), \mathfrak a(g')) = ( \alpha, \mathfrak a -1)$ otherwise. 
 	So, we get $\iota(g') <_\lex \iota(g)$. 
 	
 	Let us consider the $X_2$-chart of the blowup. 
 	In there, we obtain 
 	$g'= \ux'^{A'} - \cc \ux'^{B'}  :=
 	x_1'^{\alpha} \ux_+'^{A_+} - \cc x_2'^{\beta - \alpha} \ux_+'^{B_+}$.
	In order to show that $\iota(.)$ improves, we first notice that $ (\alpha(g'), \mathfrak a (g') )  = (\alpha(g), \mathfrak a (g) ) $ since $ A' = A $.
	Analogous to the $X_1$-chart, we have $\beta(g') < \beta =\beta(g) $, if $\mathfrak b = \mathfrak b (g) = 1$, or $ (\beta(g'), \mathfrak b(g')) = (\beta, \mathfrak b -1 )$ otherwise.
	Hence, we have in all cases that $\iota(g') <_\lex \iota (g)$.

 	Since the improvement of $ \iota(.) $ is strict and since 
 	$ \iota(.) $ takes values in $ \IZ_{\ge 0}^4 $, 
 	the local monomialization procedure using centers of the kind in Construction~\ref{Constr:center_codim2} 
 	ends after finitely many steps.
\end{proof}

The blowup $ \pi \colon \Bl_{D_I} (\IA_K^n) \to \IA_K^n $ with center $ D_I $ chosen following Construction~\ref{Constr:center_codim2}
is not necessarily an isomorphism outside the singular locus of $ V (g) $.

\begin{example} 
	Let $f = g = x_1 x_2 -x_3^5 \in K[x_1,x_2,x_3] $.
	Construction~\ref{Constr:center_codim2} provides the possible centers $ V(x_1, x_3) $ and $ V(x_2,x_3) $.
	Both are not contained in the singular locus of $ V(g) $, which is 
	$ \Sing (V(g)) = V (x_1,x_2,x_3) $.
	Therefore, the potential centers are strictly larger than the singular locus and the corresponding blowup morphisms are not an isomorphisms outside of the singular locus. 
\end{example}

In Algorithm~\ref{cc3}, we provide an implementation of Construction~\ref{Constr:center_codim2}
to choose a center of codimension $ 2 $.

\begin{algorithm}
	\caption{compute center (with codimension 2)}
	\begin{algorithmic}[1]
		\INPUT list $ M $, $\mbox{mode}= 2 $,
		where $ M $ is of the same form as $ L[i] $ in Remark~\ref{Rk:Algo1}
		\OUTPUT $ I \subseteq \{ 1, \ldots, n \} $ such that $ V ( x_i \,|\, i \in I ) $ is the next center in the monomialization process
		\State $ (A,B,C,E) = M[1] $
		\State $ \alpha = \max\{ A_i \} $, $ \beta = \max\{ B_i \} $
		\State  $ I = \{ \, \min\{ i \mid A_i = \alpha \}, \, \min\{ i \mid B_i = \beta \} \, \} $
		\State\Return  $ I $
	\end{algorithmic}
	\label{cc3}
\end{algorithm} 

\begin{Rk}[Algorithm~\ref{cc3}]
	The input and the output are of the same form as in Algorithm~\ref{cc4} (see Remark~\ref{Rk:Algo_cc4}) with the only difference that the input {mode} is $ 2 $.
	We initialize the data,
	by fixing the names of the exponents of the binomial $f=\ux^C(\ux^A-\cc\ux^B)$ of this chart
	and determining the maximal exponents $ \alpha $ resp.~$ \beta $ appearing on each side. 
	Then,
	we choose $ i$ and $ j $ minimal in $  \{ 1, \ldots, n \} $ such that $ A_i =\alpha $ and $ B_j =\beta $ achieve the maximal values. 
\end{Rk}

Note that we cannot have $ \alpha = 0 $ or $ \beta = 0$ in this algorithm since we tested whether \eqref{eq:ex_loc_mon_cond} holds before applying  \textit{compute\_center} in
Algorithm~\ref{main} (line 3) resp.~in Algorithm~\ref{transformation} (line 11).

\smallskip

\section{Centers of minimal codimension contained in the singular locus}
\label{theory:singloc}

The third variant is a mixture of the first two.
Namely, we want to choose centers as large as possible (as in Construction~\ref{Constr:center_codim2}), 
but we require additionally that along the monomialization process 
the centers are contained in the singular locus of the factor, which we obtain after factoring the monomial part, (similar to Construction~\ref{Constr:center_max_ord}).
If this singular locus is empty, we follow the method of section~\ref{theory:codim2} {(Construction~\ref{Constr:center_codim2})}.
Within this, we have to distinguish several cases.

%
%

\begin{Constr}
	\label{Constr:center_non-monomial}
	Let $ f = \ux^C (\ux^A - \cc \ux^B) \in K[\ux] $
	with $ \cc \in K^\times $ and $ A, B, C \in \IZ_{\ge 0}^n $ 
	such that $ A_i B_i = 0 $ for all $ i \in \{ 1, \ldots, n \} $.
	Set $ g : = \ux^A - \cc \ux^B $.
	Assume that hypothesis \eqref{eq:cond_not_loc_mon} holds.
	Let $ \iota(g) = (\alpha,\mathfrak a, \beta, \mathfrak b) \in \IZ_{\ge 0}^4 $.
	\begin{itemize}
		\item[(i)]
		If $ \min \{ \alpha, \beta \} \geq 2 $ or $ \min \{ |A|, |B| \} = 1 $,  
		choose $ D_I  = V(x_{j_1},x_{j_2}) $, for $ I = \{ j_1, j_2 \} $ as in Construction~\ref{Constr:center_codim2}, for the center of the blowup.
	
		\smallskip 
		
		\item[(ii)] 
		If $ \alpha = 1 $, $ \beta \ge 2 $ and $ \min\{ |A|,|B|\} \ge 2 $,
		 choose $ j_1,  j_2 , j_3 \in \{ 1, \ldots, n \} $ with $ A_{j_1} = A_{j_2} = 1 $ and $ B_{j_3} = \beta $.
		The center of the next blowup is $ D_I = V (x_{j_1}, x_{j_2}, x_{j_3}) $,
		for $ I = \{ j_1, j_2, j_3 \} $.	
		
		\smallskip 
		
		\item[(iii)] 
		If $ \alpha \ge 2 $, $ \beta = 1 $ and $ \min\{ |A|,|B|\} \geq 2 $,
		 choose $ j_1,  j_2 , j_3 \in \{ 1, \ldots, n \} $ with $ A_{j_1} = \alpha $ and $ B_{j_2} = B_{j_3} = 1 $.
		The center of the next blowup is $ D_I = V (x_{j_1}, x_{j_2}, x_{j_3}) $,
		for $ I = \{ j_1, j_2, j_3 \} $.	
		
		\smallskip
		
		\item[(iv)] 
		If $ \alpha = \beta = 1 $ and $ \min\{ |A|,|B|\} \geq 2 $,
		 choose $ j_1,  j_2 , j_3, j_4 \in \{ 1, \ldots, n \} $ with $ A_{j_1} =  A_{j_2} = B_{j_3} = B_{j_4} = 1 $.
		The center of the next blowup is $ D_I = V (x_{j_1}, x_{j_2}, x_{j_3}, x_{j_4}) $,
		for $ I = \{ j_1, j_2, j_3, j_4 \} $.	
		
	\end{itemize}
	We say {\em $ (f,g) $ is in case ($*$)} if condition ($*$) is fulfilled,
	where $ * \in \{ \mbox{i}, \mbox{ii}, \mbox{iii}, \mbox{iv} \} $. 
\end{Constr}

Observe that for $ f = x_1 (x_1 - x_2 x_3) $ we are in case (i),
while $ f = x_1 (x_1 - x_2) $ is monomial. 
Furthermore, $ f = x_1 x_2 x_3 - x_4 x_5 x_6 $ is case (iv) and there are several choices for the center.

\begin{Prop}
	\label{Prop:singlocus_terminates} 
	Let $ f = \ux^C (\ux^A - \cc \ux^B) \in K [x_1, \ldots, x_n]$
	with $ \cc \in K^\times $ and $ A, B, C \in \IZ_{\ge 0}^n $ 
	such that $ A_i B_i = 0 $ for all $ i \in \{ 1, \ldots, n \} $.
	Let $ g  = \ux^A - \cc \ux^B $ and $ \iota(g) = (\alpha,\mathfrak a, \beta, \mathfrak b) \in \IZ_{\ge 0}^4 $.
	Let $ \pi \colon \Bl_{D_I} (\IA_K^n) \to \IA_K^n $ be the blowup in a center 
	$ D_I $, which fulfills the properties as in Construction~\ref{Constr:center_non-monomial}. 
	For every standard chart $ U_{x_j} := D_+ (X_j) \cong \IA_K^n $,
	$ j \in I $, we have 
	\[
	\iota(g') <_\lex \iota(g),
	\]
	where $ f = \ux'^{C'} (\ux'^{A'} - \cc \ux'^{B'}) \in K[\ux']  $ with $ A_i' B_i' = 0 $ for all $ i \in \{ 1, \ldots, n \} $, 
	$ g' := \ux'^{A'} - \cc \ux'^{B'} $,
	and $ (\ux') = (x_1', \ldots, x_n' ) $ are the coordinates in $ U_{x_j} $.  
	
	In particular, the local monomialization process obtained by choosing the centers as in Construction~\ref{Constr:center_non-monomial} terminates.
\end{Prop}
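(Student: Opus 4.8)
The plan is to split Construction~\ref{Constr:center_non-monomial} into its case (i) and its cases (ii)--(iv). In case (i) the center $D_I = V(x_{j_1}, x_{j_2})$ is chosen exactly as in Construction~\ref{Constr:center_codim2}, so the asserted strict decrease $\iota(g') <_\lex \iota(g)$ in every standard chart is already the content of Proposition~\ref{Prop:codim2_terminates} and nothing new is needed. For the remaining cases I would argue directly from Observation~\ref{Obs:bu}. The first step is the uniform remark that in each of (ii)--(iv) one has $\delta := \min\{\sum_{i\in I} A_i, \sum_{i\in I} B_i\} = 2$. Indeed, since $A_iB_i = 0$, the indices of $I$ split into an ``$A$-part'' (where $A_i > 0$ and $B_i = 0$) and a ``$B$-part'' (where $B_i > 0$ and $A_i = 0$), and the construction forces $\sum_{i\in I}A_i$ to be $2$ in cases (ii), (iv) and $\alpha$ in case (iii), while $\sum_{i\in I}B_i$ is $\beta$ in case (ii) and $2$ in cases (iii), (iv); the hypotheses $\beta\ge 2$ (case ii) and $\alpha\ge 2$ (case iii) then give $\delta = 2$ throughout. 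By Observation~\ref{Obs:bu}, in the chart $U_{x_j}$ only the $j$-th exponents change: $A'_j = \sum_{i\in I}A_i - 2$ and $B'_j = \sum_{i\in I}B_i - 2$, whereas $A'_k = A_k$ and $B'_k = B_k$ for all $k \ne j$.

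The second step is to run through the charts according to whether $j$ lies in the $A$-part or the $B$-part of $I$. If $j$ is in the $A$-part, then $A'_j$ strictly drops (to $0$ in cases (ii), (iv), and to $\alpha - 2 < \alpha$ in case (iii)), and I would show that the pair $(\alpha, \mathfrak{a})$ decreases lexicographically, which already yields $\iota(g') <_\lex \iota(g)$. If instead $j$ is in the $B$-part, then the modification of $A'_j = \sum_{i\in I}A_i - 2$ does not disturb the maximum of the $A$-vector -- in cases (ii), (iv) one even has $A' = A$, and in case (iii) the unique $\alpha$-index $j_1\notin\{j\}$ survives with $A'_{j_1}=\alpha$ -- so $(\alpha,\mathfrak{a})$ is preserved; then $B'_j$ drops and I would show that $(\beta, \mathfrak{b})$ decreases lexicographically while $(\alpha,\mathfrak{a})$ stays fixed, again forcing $\iota(g') <_\lex \iota(g)$. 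This exhausts all $j \in I$.

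The point requiring the most care is the bookkeeping of the maximal exponents and their multiplicities, for which I would distinguish two situations. When the relevant side of the center contains two indices carrying the maximal exponent (the $A$-part in cases (ii), (iv) and the $B$-part in cases (iii), (iv), all of exponent $1$), the sibling index keeps its exponent after the blowup, so the maximal value is preserved and only the multiplicity $\mathfrak{a}$ (resp.\ $\mathfrak{b}$) drops by one. When instead the relevant side consists of a single index with a large exponent (the $\beta$-index in case (ii), the $\alpha$-index in case (iii)), no sibling is available inside the center, and I would split on the multiplicity $\mathfrak{b}(g)$ (resp.\ $\mathfrak{a}(g)$): if it equals $1$ the maximal value itself strictly decreases, while if it is $\ge 2$ a backup index outside the center keeps the value and the multiplicity drops. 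Along the way one must also check that the normalization $A'_iB'_i = 0$ persists after each blowup, which holds because factoring out $\delta$ annihilates one of $A'_j, B'_j$ and the other coordinates are unchanged.

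In every sub-case either the first two or the last two coordinates of $\iota$ strictly decrease without the earlier coordinates growing, which is precisely the required $<_\lex$-drop. Since $\iota(\cdot)$ takes values in $\IZ_{\ge 0}^4$ and the improvement is strict, the local monomialization process terminates, exactly as in Propositions~\ref{Prop:codim2_terminates}. I expect the main obstacle to be purely organizational: the last dichotomy (sibling present versus single large index) has to be carried out cleanly for each of the three cases, and the asymmetry between cases (ii) and (iii) means one cannot quite reduce everything to a single symmetric computation, even though the underlying mechanism is the same.
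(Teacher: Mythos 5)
Your proposal is correct and follows essentially the same route as the paper: case (i) is delegated to Proposition~\ref{Prop:codim2_terminates}, and in cases (ii)--(iv) one computes the transformed exponents via Observation~\ref{Obs:bu} and checks chart by chart that either $(\alpha,\mathfrak{a})$ or $(\beta,\mathfrak{b})$ drops lexicographically while the earlier coordinates of $\iota$ are preserved. Your uniform observation that $\delta=2$ in all of (ii)--(iv), together with the $A$-part/$B$-part dichotomy, is just a slightly more streamlined organization of the paper's per-case relabeling argument; all the delicate points (the sibling index preserving the maximum, the split on multiplicity when no sibling exists, the surviving $\alpha$-index in the $B$-part of case (iii), and the persistence of $A_i'B_i'=0$) are handled as in the paper.
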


\begin{proof}
	We show the result by going through all cases of Construction~\ref{Constr:center_non-monomial}.
	First,
	if {\em $(f,g)$ is in case (i)}, then we have $ \iota(g') <_\lex \iota(g) $ by Proposition~\ref{Prop:codim2_terminates}.

	\smallskip 
	
	Next, we assume that {\em $(f,g)$ is in case (ii)}, 
	i.e., $\alpha = 1, \beta \ge 2$ and $\min\{|A|,|B|\} \ge 2$.
	We relabel the variables, so that we have 
	$ B_{m+1} = \beta $ and 
	$\{i \in \{1,\ldots,n\} \mid A_i = 1\} = \{1,\ldots,m\}$ for some $ 2 \le m < n$.
	In particular, 
	\[ 
		g = x_1 x_2\cdots x_m - \cc \ux^B.
	\]
	Hence, without loss of generality, the center is $D_I := V(x_1,x_2,x_{m+1})$. 
	
	The $ X_1$- and the $X_2$-chart are analogous, so we consider only one of them. 
	In the $X_1$-chart, we get
	(using the notation of the statement of the proposition)
	\[
		 g'= x_2' \cdots x_m' - \cc\, x_1'^{\beta -2}\, \ux'^B  .
	\]
	We see that $ (  \alpha(g'), \mathfrak a(g') ) = (1, m -1 ) <_\lex (1,m) = ( \alpha (g) , \mathfrak a(g) ) $.
	This implies the desired decrease $\iota(g') <_\lex \iota(g) $.
	Observe that $ (  \beta(g'), \mathfrak b(g') ) = ( \beta (g) , \mathfrak b (g) ) $ did not change.

	On the other hand, using the notation $\ux^B = x_{m+1}^\beta\ux_+^{B_+}$,
	we
	obtain
	in the $X_{m+1}$-chart
	\[ 
		g' = x_1' \cdots x_m' - \cc \, x_{m+1}'^{\beta -2} \, \ux_+'^{B_+} 
		.
	\] 
	We have $ (  \alpha(g'), \mathfrak a(g') ) = ( \alpha (g) , \mathfrak a(g) ) $
	and $ \beta(g') \leq \beta(g) $. 
	Either the inequality is strict
	or we have equality and $ \mathfrak b (g') = \mathfrak b (g) - 1 $ since the power of $ x_{m+1}' $ decreased strictly. 
	In both cases, we get $\iota(g') <_\lex \iota(g) $.

	\smallskip 
	
	The case that {\em $ (f,g) $ is in case (iii)} is analogous to the previous one.
	One only has to interchange the role of $ A $ and $ B $ and take into account that $ (\alpha(g'), \mathfrak a (g')) = (\alpha(g), \mathfrak a (g) ) $ in the $ X_1 $-(resp.~$ X_2 $-)chart. 
	
	\smallskip 
	
	Finally, suppose that {\em $(f,g)$ is in case (iv)}.
	After relabeling the 
	variables we get 
	\[ 
		g= x_1\cdots x_m - \cc x_{m+1} \cdots x_{m+\ell},
	\] 
	for some 
	$2\le m<n$ and $ 2 \leq  \ell < n $ with $m+\ell \le n$.
	Without loss of generality,
	the center is $D_I = V(x_1,x_2,x_{m+1},x_{m+2})$. 
	So, we have to consider four charts. 
	In the $X_1$-chart we get
	\[ 
		g' = \ux'^{A'} - \cc \ux'^{B'} =  x_2' \cdots x_m' - \cc x_{m+1}' \cdots x_{m+\ell}' .
	\]
	This implies that 
	$ (\alpha(g'), \mathfrak a (g') ) = (\alpha(g), \mathfrak a (g)- 1 ) $
	and (since $ B' = B $) we also have 
	$ (  \beta(g'), \mathfrak b(g') ) = ( \beta (g) , \mathfrak b (g) ) $.
	In particular, we get $ \iota(g') <_\lex \iota(g) $. 
	The other three charts are analogous.

	\smallskip 
	
	Since the improvement of $ \iota(.) $ is strict in every case 
	and since 
	$ \iota(.) $ takes values in $ \IZ_{\ge 0}^4 $, 
	the local monomialization procedure using centers of the kind in Construction~\ref{Constr:center_non-monomial} 
	ends after finitely many steps.
\end{proof}

In Algorithm~\ref{cc2} we discuss an implementation for the choice of the center following Construction~\ref{Constr:center_non-monomial}.

\begin{algorithm}
	\caption{compute center of minimal codimension contained in the singular locus}
	\begin{algorithmic}[1]
		\INPUT list $ M $, $\mbox{mode}= 3 $,
		where $ M $ is of the same form as $ L[i] $ in Remark~\ref{Rk:Algo1}
		\OUTPUT $ I \subseteq \{ 1, \ldots, n \} $ such that $ V ( x_i \,|\, i \in I ) $ is the next center in the monomialization process
		\State $ (A,B,C,E) = M[1] $
		\State $ \alpha = \max\{ A_i \} $, $ \beta = \max\{ B_i \} $
		\State  $ i_1 = \min\{ i \mid A_i = \alpha \} $, 
		$ i_2 = \min\{ i \mid B_i = \beta \} $
		\State  $ I = \{ i_1, i_2 \} $
		
		\If{$\min\{ \alpha,\beta\} \ge 2$ or $\min\{|A|,|B|\} == 1 $ 
		}\hskip5em// case(i)
		\State\Return $ I $

		\ElsIf{$\min\{|A|,|B|\} \ge 2$}
		\If{$\alpha =1$ and $\beta \ge 2$}
		\Comment {case(ii)}
		\State $ I = I \cup \min\{ i \mid A_i = 1 \mbox{ and } i > i_1  \} $
		
		\ElsIf{$\alpha \ge 2 $ and $\beta =1$}
		\Comment {case(iii)}
		\State $ I = I \cup \min\{ i \mid B_i = 1 \mbox{ and } i > i_2  \} $
		
		\ElsIf{$\alpha =1$ and $\beta =1$}
		\Comment {case(iv)}
		\State $ I = I \cup \{ \,
		\min\{ i \mid A_i = 1 \mbox{ and } i > i_1  \}, \, 
		\min\{ i \mid B_i = 1 \mbox{ and } i > i_2  \} \, \}  $
		
		\EndIf
		\State\Return $ I $ 
		
		\EndIf
	\end{algorithmic}
	\label{cc2}
\end{algorithm}

\begin{Rk}[Algorithm~\ref{cc2}]
	The input and the output are of the same form as in Algorithm~\ref{cc4} (see Remark~\ref{Rk:Algo_cc4}) with the only difference that the input mode is $ 3 $.
	First, we initialize the data and determine the minimal indices $i_1 $ and $ i_2 $, for which the maximal entry of $ A $ resp.~$ B $
	is achieved, 
	where $ f = \ux^C ( \ux^A - \rho \ux^B) $ is the monomial in the given chart. 
	
	If $ V (x_{i_1}, x_{i_2} ) $ is contained in the singular locus of $ V(\ux^A - \rho \ux^B) $ 
	or if $ f $ is of the form $ f = \ux^C ( x_j - \ux^D ) $, for $ j \in \{ i_1, i_2 \} $ and $ D \in \{ A, B \} $ the corresponding element,
	then the algorithm returns $ \{ i_1, i_2 \} $ as the index set for the upcoming center (line 5--6).
	This is case (i) of Construction~\ref{Constr:center_non-monomial}. 
	
	Otherwise, we have $ \min \{ \alpha, \beta \} = 1 $ and $ \min \{ |A|, |B|\} \geq 2 $. 
	Thus we make a case distinction depending on the value of $ \alpha $ and $ \beta $ (starting line 7), 
	where we have to add an index $ i > i_1 $ for which $ A_i = 1 $ if $ \alpha = 1 $
	and analogous if $ \beta = 1 $.
	The latter guarantees the center is contained in the singular locus of $\ux^A - \rho \ux^B $. 
	This covers the missing cases (ii)--(iv) of Construction~\ref{Constr:center_non-monomial}. 	
\end{Rk}

As in Algorithm~\ref{cc3}, the case $ \min \{ \alpha, \beta \}  = 0 $ cannot appear since we tested whether the data of the chart is of the form \eqref{eq:ex_loc_mon_cond}
(line 3 Algorithm~\ref{main} and line 11 Algorithm~\ref{transformation}). 

\medskip 

If we consider the binomial of Example~\ref{running_example2}
and apply Algorithm~\ref{cc2}, we obtain the same center as using Algorithm~\ref{cc3}.
But it may happen that the two algorithms lead to different centers. 

\begin{example}
	Let $ f = x_1 x_2 + x_3^2 \in K[x_1, x_2, x_3] $. 
	Algorithm~\ref{cc2} provides the center $ V(x_1, x_2, x_3 ) $,
	while Algorithm~\ref{cc3} gives the center $ V (x_1, x_3 ) $. 
\end{example}

\smallskip 

\section{Centers of minimal codimension contained in an exceptional divisor or contained in the singular locus}
\label{theory:preimagesingloc}

Now, we present the fourth variant, where we slightly 
relax the restriction on the centers that we imposed in the previous section.
We obtain this by taking the preceding resolution process into account. 

Let $ \pi \colon \Bl_D(\IA_K^n) \to \IA_K^n $ be the blowup with center 
$ D = V(x_i \mid i \in I) $ for $ I \subseteq \{ 1 , \ldots, n \} $.
In the $ X_i $-chart $ U_i = D_+ (X_i) \cong \IA_K^n $ ($ i \in I $), 
we have the coordinates $ (\ux') = (\ux_1', \ldots, \ux_n') $
and $ \langle x_j \mid j \in I \rangle \cdot K[\ux'] = \langle x_i' \rangle \subset  K[\ux'] $.
Hence, the preimage of the center along the blowup $ \pi $ coincides with the divisor $ E := \operatorname{div}(x_i) $ in $ U_i $. 
Since a blowup is an isomorphism outside of its center, 
we have the freedom to choose any center $ D'  \subset U_i $ contained in $ E $ without losing the condition that the composition of $ \pi $ and the blowup in $ D' $ is an isomorphism outside of $ D $.
In particular, we may choose centers of codimension two as in Construction~\ref{Constr:center_codim2}.

Clearly, the previous observation extends to any finite sequence of (local) blowups of the above type. 
This motivates the following method for choosing the center.

\begin{Constr}
	\label{Constr:center_exceptional_non-monomial}
	Let $ f = \ux^C (\ux^A - \cc \ux^B) \in K[\ux] $
	with $ \cc \in K^\times $ and $ A, B, C \in \IZ_{\ge 0}^n $ 
	such that $ A_i B_i = 0 $ for all $ i \in \{ 1, \ldots, n \} $.
	Let $ (y_1, \ldots, y_m) $, $ m \leq n $, be a subsystem of distinguished variables of 
	$ (x_1, \ldots, x_n) $ such that the exceptional divisor of the 
	local monomialization procedure is given by $ \operatorname{div}(y_1 \cdots y_m) $. 
	If there is a center $ D_I = V (x_{j_1}, x_{j_2} ) $ as in Construction~\ref{Constr:center_codim2}, 
	which is also contained in the exceptional locus, 
	$ D_I \subset \operatorname{div}(y_1 \cdots y_m) $,
	then choose $ D_I $ as the center for the next blowup.
	Otherwise, we follow Construction~\ref{Constr:center_non-monomial}.
\end{Constr} 

Note that $ D_I \subset \operatorname{div}(y_1 \cdots y_m) $ is equivalent to the condition that $ x_{j_1} = y_{k_1} $ or $ x_{j_2} = y_{k_2} $, for some $ k_1, k_2 \in \{ 1, \ldots, m \} $. 

\smallskip  

As a consequence of Propositions~\ref{Prop:codim2_terminates} and \ref{Prop:singlocus_terminates}, we get the termination of the local monomialization procedure using centers given by Construction~\ref{Constr:center_exceptional_non-monomial}
(since $ \iota(.) $ decreases strictly after the blowup).

\begin{Cor}
	\label{Cor:except_singlocus_terminates}
	The local monomialization process obtained by choosing the centers as in Construction~\ref{Constr:center_exceptional_non-monomial} terminates
	for every binomial $ f \in K [\ux] $.
\end{Cor}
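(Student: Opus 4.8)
The plan is to observe that this corollary is an immediate consequence of the two termination results already established, Propositions~\ref{Prop:codim2_terminates} and~\ref{Prop:singlocus_terminates}, together with the well-foundedness of the lexicographic order on $\IZ_{\ge 0}^4$. The key point is that the invariant $\iota(.)$ of Definition~\ref{Def:Invariant_codim_2} again serves as the controlling measure, and that every blowup performed under Construction~\ref{Constr:center_exceptional_non-monomial} is, on the level of the chosen center, of one of the two types whose effect on $\iota(.)$ we already understand.

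First I would fix a single blowup step in the process and split according to the two branches of Construction~\ref{Constr:center_exceptional_non-monomial}. In the first branch the center $D_I = V(x_{j_1}, x_{j_2})$ is by construction a center as in Construction~\ref{Constr:center_codim2} (it merely satisfies the extra requirement of lying in the exceptional locus). Hence Proposition~\ref{Prop:codim2_terminates} applies verbatim and yields $\iota(g') <_\lex \iota(g)$ in every standard chart $U_{x_j}$, $j \in I$. In the second branch the center is chosen exactly as in Construction~\ref{Constr:center_non-monomial}, so Proposition~\ref{Prop:singlocus_terminates} gives the same strict decrease $\iota(g') <_\lex \iota(g)$ in every chart. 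In either case, each of the finitely many charts produced by the blowup carries a strictly smaller value of $\iota(.)$ than its predecessor.

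Second, I would assemble these local decreases into global termination. Along any branch of the monomialization tree the values of $\iota(.) \in \IZ_{\ge 0}^4$ form a strictly $\le_\lex$-decreasing sequence; since $\le_\lex$ is a well-order on $\IZ_{\ge 0}^4$, no such sequence is infinite, so every branch has finite length. As each blowup splits a chart into at most four successor charts (the centers produced by Constructions~\ref{Constr:center_codim2} and~\ref{Constr:center_non-monomial} have codimension at most four), the tree is finitely branching, and a finitely branching tree with no infinite branch is finite. Thus the process stops after finitely many blowups.

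The only point that requires care --- and it is the place where I would be most attentive rather than a genuine obstacle --- is to confirm that imposing the additional condition $D_I \subset \operatorname{div}(y_1 \cdots y_m)$ in the first branch does not invalidate the hypotheses of Proposition~\ref{Prop:codim2_terminates}. This is fine precisely because that proposition is stated for an arbitrary center satisfying Construction~\ref{Constr:center_codim2}: the constraint on the exponents, $A_{j_1} = \alpha$ and $B_{j_2} = \beta$, is untouched by restricting the choice to those pairs $(j_1, j_2)$ that additionally meet the exceptional locus, so the exponent bookkeeping driving the decrease of $\iota(.)$ in the proof of Proposition~\ref{Prop:codim2_terminates} goes through unchanged.
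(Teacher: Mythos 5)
Your proposal is correct and matches the paper's argument, which likewise deduces the corollary directly from Propositions~\ref{Prop:codim2_terminates} and~\ref{Prop:singlocus_terminates} via the strict lexicographic decrease of $\iota(.)$ after each blowup; you merely spell out the case split between the two branches of Construction~\ref{Constr:center_exceptional_non-monomial} and the finite-branching argument, which the paper leaves implicit. No gaps.
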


In Algorithm \ref{cc1} we 
present an implementation of Construction~\ref{Constr:center_exceptional_non-monomial}.
This is analogous to Algorithm~\ref{cc2} with the only difference in line 5, where we have the additional condition $ E_{i_1} + E_{i_2} > 0 $. 
The latter holds whenever $ x_{i_1} $ or $ x_{i_2} $ correspond to an exceptional divisor of a previous blowup.

\begin{algorithm}
	\caption{compute center with minimal codimension contained in an exceptional divisor or contained in the singular locus}
	\begin{algorithmic}[1]
		\INPUT list $ M $, $\mbox{mode}= 4 $,
		where $ M $ is of the same form as $ L[i] $ in Remark~\ref{Rk:Algo1}
		\OUTPUT $ I \subseteq \{ 1, \ldots, n \} $ such that $ V ( x_i \,|\, i \in I ) $ is the next center in the monomialization process
		\State $ (A,B,C,E) = M[1] $
		\State $ \alpha = \max\{ A_i \} $, $ \beta = \max\{ B_i \} $
		\State  $ i_1 = \min\{ i \mid A_i = \alpha \} $, 
		$ i_2 = \min\{ i \mid B_i = \beta \} $
		\State  $ I = \{ i_1, i_2 \} $
		
		\If{$\min\{ \alpha,\beta\} \ge 2$ or $\min\{|A|,|B|\}==1$ or $ E_{i_1} + E_{i_2} > 0 $}
		\State\Return $ I $ \Comment {exceptional or case(i)}
		
		\Else 
		\State $ I $ = compute\_center$ (M, 3) $
		\State\Return $ I $ \Comment {case(ii),(iii),(iv)}
		\EndIf 
	\end{algorithmic}
	\label{cc1}
\end{algorithm}

\smallskip

\section{A glimpse into the case of more than one binomial and non-invertible coefficients}

Let us briefly outline a method to extend our procedures to finitely many binomials
	and to a single binomial with coefficients in $ \IZ_p $.

\smallskip 

\begin{Constr}
	Let $ f_1, \ldots, f_m \in K [\ux] = K[x_1, \ldots, n] $ be finitely many binomials, where $  K $ is a field.
	In order to monomialize them, we may successively apply one of the discussed procedures to $ f_1 $, then the total transform of $ f_2 $ and so on. 
	Since each of methods terminates, we reach the case that each $ f_i $ is locally monomial. 
\end{Constr}

At the end of the last construction, the total transform of the binomials $ f_1, \ldots, f_m $ are not necessarily simultaneously locally monomial,
or in other words, the product $ f_1 \cdots f_m $ is not necessarily locally monomial, as the following example illustrates. 

\begin{example}
	\label{Ex:there_is_more_for_m>1}
	Let $ K $ be a field and consider 
	\[
		f_1 := x_1 - 1, \ \ \ f_2 := x_2 - 1, \ \ \ f_3 := x_1 x_2 - 1. 
	\]
	We claim that 
	the product $ f_1 f_2 f_3 $ is not locally monomial.
	To see this, we introduce $ y_1 := x_1 - 1 $ and $ y_2 := x_2 - 1 $.
	Then $ f_1 f_2 f_3 = y_1 y_2 ( y_1 y_2 + y_1 + y_2) $.  
\end{example}

Eventually, the task to make $ f_1, \ldots, f_m $ simultaneously locally monomial can be reduced to the problem of (locally) monomializing an element of the form 
\[
	\prod_{i=1}^m (\ux^{A(i)} - \lambda_i), 
	\ \ \ \ \lambda_i \in K^\times \mbox{ and } A(i) \in \IZ_{\ge 0}^n \mbox{ for } 1 \leq i \leq m.
\]
This problem is connected to the desingularization of arrangements of smooth subvarieties, 
which is treated over algebraically closed fields in \cite{Hu} or \cite{LiLi}, for example.
{Nonetheless, for finitely many prime characteristics $ p = \operatorname{char}(K) > 0 $ (depending on the exponents $ A(i) $) 
	the situation becomes more involved and further investigations are required.}
Since the present article focuses on the case of a single binomial, 
we do not go into the details here.

\smallskip

When successively applying a monomialization method to $ (f_1, f_2, \ldots, f_m) $,
the order in which we handle the elements has an impact on the final numbers of charts.

\begin{example}
	\label{Ex:dependenc_ordering}
	Let $ K $ be any field. 
	Consider the binomials 
	\[ 
		f_1 = v^2-y^4z , \ \ f_2=x^2y-z^3  \ \ \in K[x,y,z,v].
	\]
	
	If we use our local monomialization method with codimension two 
	centers for $ ( f_1, f_2) $
	(i.e., first for $ f_1 $ and then for the total transform of $ f_2 $),
	then the procedure needs computation in $43$ charts and $19$ of these charts 
	are final charts.
	
	On the other hand, if we take the order $ (f_2, f_1) $, then the same procedure needs only $31$ charts
	and $12$ of them are final charts. 
\end{example}

\smallskip 

We now turn out attention to the situation over $ \IZ_p $ instead of over a field.

\begin{Rk}
	\label{Rk:ZZ_p}
	We observe that our proofs for the termination of the local monomialization procedures
	rely on a study of the exponents. 
	This suggests that 
	the methods may also be used for a first steps towards a monomialization
	if we are not necessarily restricted to the situation over field $ K $. 
	Consider a binomial in $ \IZ_p [\ux] = \IZ_p [x_1, \ldots, x_n] $, say
	\[
		f = p^e \hspace{1pt} \ux^C ( \, \ux^A - \lambda \hspace{1pt} p^d \hspace{1pt} \ux^B \, ),
		\hspace{30pt} \lambda \in \IZ_p^\times, 
	\]  
	for $ d, e \in \IZ_{\geq 0} $ and $ A,B, C \in \IZ_{\ge 0}^n $ with $ A_i B_i = 0 $ for all $ i \in \{ 1, \ldots, n \} $. 
	We fix one of the procedures, which we discussed, and apply it to $ f $, considered as a binomial with coefficients in the field $ \IQ_p = \operatorname{Quot}(\IZ_p) $. 
	As we have seen, this terminates after finitely many blowups.
	Since the coefficients are in $ \IZ_p $, the resulting total transform of $ f $ is not necessarily locally monomial. 
	For example, it may appear that the total transform of $ f $ is of the form
	\[ 		
		p^e \hspace{1pt} \ux'^{C'} ( \, \ux'^{A'} - \lambda \hspace{1pt} p^d \hspace{1pt} \, ),
	\]
	for some $ A' \in \IZ_{\ge 0}^n $ with $ |A'| \geq 2 $. 
	Here, $ d, e \in \IZ_{\ge 0} $ are the same integers as at the beginning since we did not touch coefficients.
	If $ d \neq 0 $, then the monomialization process is not finished, yet.  
	
	There are at least two directions that one could follow:
	\begin{enumerate}
		\item 
		We blow up centers of the form $ \langle x_i', p \rangle $,
		where $ i $ is chosen appropriately. 
		This has the drawback that the ambient ring after the blowup is not necessarily isomorphic to a polynomial ring over $ \IZ_p $.
		More precisely, in the $ X_i' $-chart, we get
		$ \IZ_p[\ux',v]/ \langle p - x_i' v \rangle $.
		
		\smallskip 
		
		\item  
		An alternative method 
		is to make a case distinction depending on the residues of $ x_i' $ modulo $ p $. 
		If $ x_i' \equiv 0 \mod p $, we can write it as $ x_i' = p \, y_i $ for some new variables taking values in $ \IZ_p $. 
		By choosing $ i $ appropriately, we can make $ d $ decrease.
		On the other hand, if $ x_i' \not\equiv 0 \mod p $ for all $ i $, then $ \ux'^{A'} - \lambda \hspace{1pt} p^d $ is a unit.
		Thus $ f $ is monomial. 
	\end{enumerate}
\end{Rk}

\smallskip

\section{Comparing the variants}

Finally, let us compare the discussed algorithms for monomializing a binomial.
First, we analyze the complexity of the algorithms by estimating the maximal possible number of blowups needed to monomialize a given binomial. 
After that we turn out attention to explicit examples, where we compare the numbers of charts appearing along the monomialization and as well as the number of final charts. 
At the end, we briefly look at the question, whether the different choice for the centers in a fixed method have an impact on the resulting numbers.

\begin{Rk}
	We can interpret the process of blowing up as a tree structure. 
	The vertices correspond to the charts, 
	where we put the original data on level 0
	and all charts, which arise after the $ \ell $-th blowup are put on level $ \ell $. 
	Two vertices $ v $ on level $ \ell $ and $ w $ on level $ \ell +1 $ are connected by an edge if the chart corresponding to $ w $ is one of the charts of the blowup in $ v $. 
	The unique vertex on level $ 0 $ is called the {\em root} of the tree and
	vertices on level $ \ell $, which are not connected to any vertex of a higher level, are called {\em leaves} of the tree.
	The latter correspond to the final charts of the blowup procedure. 
	In Figure~\ref{Fig:BlowupTree}, we illustrate the tree structure for a simple example.
	
	\begin{figure} 
		\[ 
		\begin{tikzpicture}
		
		\draw  (2, 1.5) -- (0,0) -- (-2, 1.5);
		\draw  (-4, 3) -- (-2, 1.5) -- (0,3);
		\draw  (-2, 4.5) -- (0,3) -- (2,4.5);
		
		\node at (0,0) {\fcolorbox{black}{white}{$ f = y^2 - x^3 $}};
		\node at (-2, 1.5) {\fcolorbox{black}{white}{$ D_+(X) : \,  x^2 (y^2 - x) $}};
		\node at (2, 1.5) {\fcolorbox{black}{white}{$ D_+(Y) : \, y^2 (1 - x^3 y) $}};
		\node at (-4, 3) {\fcolorbox{black}{white}{$ D_+(X) : \, x^3 ( x y^2 - 1) $}};
		\node at (0, 3) {\fcolorbox{black}{white}{$ D_+ (Y) : \, x^2 y^3 (y - x) $}};
		\node at (-2, 4.5) {\fcolorbox{black}{white}{$ D_+ (X) : \, x^6 y^3 (y - 1) $}};
		\node at (2, 4.5) {\fcolorbox{black}{white}{$ D_+ (Y) : \, x^2 y^6 (1 - x) $}};

		\node at (5,0) {(0)}; 
		\node at (5,1.5) {(1)}; 
		\node at (5,3) {(2)}; 
		\node at (5,4.5) {(3)}; 
		 
		\end{tikzpicture}
		\]
		\caption{Example for the tree structure of a blowup process for $ f = y^2 - x^3 $.
		In the boxes we indicate, which of the charts of the blowup we are considering, 
		and we provide the total transform of $ f $. 
		We abuse notation and denote the coordinates in each chart by $ (x,y) $.
		In every case, where a blowup is performed, the ideal of the center is $ \langle x, y \rangle $. 
		The number $ (\ell) $ on the right marks the level.
		There are two leaves on level 3 and one leaf each on level 2 and 1.}
		\label{Fig:BlowupTree} 
	\end{figure}
	
	The number of charts is delimited by the number of charts, which we newly create after a blowup, 
	and by the longest path from the root to any leaf.
	The first of these numbers is determined by the codimension of the center. 
	This provides the following bounds:
	
	\begin{equation} 
	\label{eq:bounds_charts_one_bu}
	\def\arraystretch{1.5}
		\begin{array}{|l|c|}
		\hline 
			\mbox{mode}  
			&\mbox{maximal codimension of a possible center}
			\\
			\hline 
			1 \mbox{ (max.ord.)}
			&
			n = 
			\mbox{(number of variables)}
			\\
			\hline 
			2 \mbox{ (codim.2)}
			&
			2
			\\
			\hline 
			3 \mbox{ (min.codim.)}
			&
			4
			\\
			\hline 
			4 \mbox{ (exc.)}
			&
			4
			\\
			\hline 
		\end{array}
	\end{equation}
\end{Rk}

Using the invariant, which we introduced to prove the termination of the respective variant for monomialization, 
we can bound the length of the longest path of the resulting tree of blowups. 

\begin{Lem}
	\label{Lem:longest_path}
	Let $ g = \ux^A - \cc \ux^B \in K[\ux] = K [x_1, \ldots, x_n] $
	with $ \cc \in K^\times $, $ A, B \in \IZ_{\ge 0}^n $ 
	such that $ A_i B_i = 0 $ for all $ i \in \{ 1, \ldots, n \} $.
	The following are upper bounds for the longest path
	from the root to any leaf in the blowup tree of the respective mode:	
	\begin{equation} 
	\label{eq:bounds_tree}
	\def\arraystretch{1.5}
	\begin{array}{|l|c|}
	\hline 
	\mbox{mode}  
	&\mbox{upper bound for the longest path}
	\\
	\hline 
	1 \mbox{ (max.ord.)}
	&
	\displaystyle 
	2^{m-1} M + m - 1 
	- \sum_{\ell=1}^{m-1} 2^{m-\ell-1} (m - \ell + 1)
	\\
	\hline 
	2 \mbox{ (codim.2)}
	&
	(\alpha + \beta -4)(n-1) + \fa + \fb + 1
	\\
	\hline 
	3 \mbox{ (min.codim.)}
	&
	(\alpha + \beta -4)(n-1) + \fa + \fb + 1
	\\
	\hline 
	4 \mbox{ (exc.)}
	&
	(\alpha + \beta -4)(n-1) + \fa + \fb + 1
	\\
	\hline 
	\end{array}
	\end{equation}
	where 
	\[ 
	\begin{array}{ll} 
	m :=\min\{ |A|,|B|\}, 
	&
	M:= \max\{ |A|,|B|\}, 
	\\
	\alpha := \max\{A_i \mid i \in \{1,\dots,n\}\}, 
	&
	\fa :=  \#\{i \in \{1,\dots,n\} \mid A_i = \alpha(g) \},
	\\
	\beta := \max\{B_i \mid i \in \{1,\dots,n\}\},
	&
	\fb :=  \#\{i \in \{1,\dots,n\} \mid B_i = \beta(g)\}.
	\end{array} 
	\]
\end{Lem}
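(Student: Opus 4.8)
The plan is to read off each bound from the termination invariants already in hand: along any root-to-leaf path each blowup produces exactly one strict decrease of the relevant invariant, so the length of the longest path is bounded by the length of the longest strictly decreasing chain of invariant values that the blowup dynamics can realize. For mode~1 the relevant invariant is $\inv = (\min\{|A|,|B|\}, \max\{|A|,|B|\})$ of Definition~\ref{Def:inv}, and for modes~2--4 it is $\iota = (\alpha, \fa, \beta, \fb)$ of Definition~\ref{Def:Invariant_codim_2}. I would treat the two cases separately, since for $\iota$ both $\alpha$ and $\beta$ are non-increasing, whereas for $\inv$ the second entry may grow, which is exactly what forces the exponential bound in the first row.

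For modes~2, 3 and 4 I would first extract from the chart-by-chart computations in the proofs of Propositions~\ref{Prop:codim2_terminates} and \ref{Prop:singlocus_terminates} the following structural fact: in every chart of every admissible blowup (cases (i)--(iv)), $\iota$ strictly decreases by reducing exactly one of the two halves $(\alpha, \fa)$ and $(\beta, \fb)$ while leaving the other half unchanged, and moreover $\alpha$ and $\beta$ never increase. Consequently, along a fixed path the steps that touch $(\alpha,\fa)$ and the steps that touch $(\beta,\fb)$ are disjoint, so the path length is at most $N_{(\alpha,\fa)} + N_{(\beta,\fb)}$, the sum of the lengths of two independent lexicographically decreasing chains. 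Each chain I would bound by a direct count: $\alpha$ runs through at most its range down to its terminal value, at each fixed value of $\alpha$ the count $\fa$ strictly decreases through at most $n-1$ values, and after $\alpha$ drops the new $\fa$ is at most $n-1$ because the blown-up variable has its $A$-exponent killed. This yields $N_{(\alpha,\fa)} \le (\alpha - 2)(n-1) + \fa$ and, symmetrically, $N_{(\beta,\fb)} \le (\beta - 2)(n-1) + \fb$; adding the two together with a single $+1$ for the terminal chart gives the common bound. The identical formula for all three modes reflects that only the branching (the codimension of the center) differs between them, not the per-step decrease of $\iota$ that governs the depth.

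For mode~1 I would instead set up a recursion. From the proof of Proposition~\ref{Prop:max_ord_terminates}: in a chart that preserves $\mu := \min\{|A|,|B|\}$ the entry $\nu := \max\{|A|,|B|\}$ strictly decreases, whereas in a chart where $\mu$ drops one has $\nu' = |B| + |B_I| - |A| \le 2\nu - \mu$, using $|B_I| \le |B| = \nu$. Writing $\bar L(\mu,\nu)$ for the solution of $\bar L(\mu, \nu) = 1 + \bar L(\mu - 1, 2\nu - \mu)$ with base $\bar L(1, \nu) = \nu$, I would prove by induction on $\mu$ that the true longest path is at most $\bar L(\mu, \nu)$; the induction step reduces to the monotonicity inequality $\bar L(\mu, \nu - 1) \le \bar L(\mu-1, 2\nu - \mu)$, which says that spending a step to lower $\nu$ at the current level never beats immediately dropping $\mu$. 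Unrolling the recursion from level $m$ down to level $1$ gives $\nu$ at level $m - j$ equal to $2^j \nu - (2^j - 1)\mu + d_j$ with $d_j = 2 d_{j-1} + (j-1)$ and $d_0 = 0$; adding the $m-1$ unit contributions and the base term and simplifying the resulting geometric-type sum then produces exactly $2^{m-1}M + m - 1 - \sum_{\ell=1}^{m-1} 2^{m-\ell-1}(m-\ell+1)$.

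The main obstacle I anticipate is twofold. In mode~1 it is justifying that the doubling estimate $\nu \le 2\nu - \mu$ is genuinely the worst case and then solving the recursion to land on the precise closed form, i.e.\ proving the monotonicity inequality needed for the induction and carrying out the arithmetic of the alternating sum. In modes~2--4 the delicate point is not the shape of the bound but its exact affine constants: getting the factor $n-1$ (rather than $n$) together with the right terminal values of $\alpha$ and $\beta$ requires tracking that the blown-up variable is consumed at each drop, so that $\fa$ and $\fb$ reset to at most $n-1$, in order to reach precisely $(\alpha + \beta - 4)(n-1) + \fa + \fb + 1$ rather than a coarser estimate.
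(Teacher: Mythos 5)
Your proposal is correct and follows essentially the same route as the paper: for modes 2--4 you split the decrease of $\iota$ into the two independent chains in $(\alpha,\fa)$ and $(\beta,\fb)$ with the reset $\fa',\fb'\le n-1$ after each drop of $\alpha$ resp.\ $\beta$, exactly as in the paper's proof, and for mode 1 you use the same two worst-case transitions $(m,M)\mapsto(m-1,2M-m)$ and $(m,M)\mapsto(m,M-1)$ extracted from the proof of Proposition~\ref{Prop:max_ord_terminates}. The only (minor, organizational) difference is in mode 1: the paper parametrizes an arbitrary step sequence by the counts $k_\ell$ of second-type steps and maximizes the resulting expression (finding the maximum at $k_\ell=0$), whereas you encode the same optimization as the recursion $\bar L(\mu,\nu)=1+\bar L(\mu-1,2\nu-\mu)$ together with a monotonicity/exchange inequality, whose unrolling (with $d_{m-1}=2^{m-1}-m$) reproduces the paper's closed form.
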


\begin{proof}
Recall that for centers contained in maximal order locus of $ g  $ ($\mbox{mode}=1$), we introduced
$
	\inv(g) = \big(   \min\{ |A|, |B| \}, \, \max \{ |A|, |B| \}  \big) \in \IZ_{\ge 0}^2 
$
as measure for the complexity of the singularity (Definition~\ref{Def:inv}), which strictly decreases
with respect to the lexicographical ordering on $ \IZ_{\ge 0}^2 $
after each blowup.
We have 
$  
	\inv(g) = (m,M).
$
Observe that $ m \leq M $.
Let $ (m',M') $ be the value of $ \inv(g') $ in a chart after the blowup in a center contained in the locus of maximal order of $ g $. 
The proof of Proposition~\ref{Prop:max_ord_terminates} provides that either
\begin{enumerate}
	\item[$(i)$] $ (m',M') \leq_\lex (m-1, 2M -m) $, or 
	\item[$(ii)$] $	(m',M') \leq_\lex (m,M-1) $.
\end{enumerate}
If we are $ k_1 $ times in case $ (ii) $ and then once in $ (i) $, 
we obtain that the value of $ \inv(.) $ is bounded by
\[
	(m-1, \, 2 (M-k_1) - m ).
\]
Note that $ k_1 \in \{ 0, \ldots,  M - m \} $ since we cannot have $ M-1 < m $ in $ (ii) $.
At this stage, we performed $ k_1 + 1 $ blowups. 
Iterating this, the upper bound for the invariant becomes
\[
	\Big( 
	m-s, \, 
	2^{s} M - \sum_{\ell = 1}^{s} 2^{s-\ell+1} k_\ell - \sum_{\ell=1}^s 2^{s-\ell} (m - \ell + 1) 
	\Big) 
\] 
after $ s $ steps,
where,
for $ \ell \in \{ 1, \ldots, s\} $, 
\[  
	k_\ell \in \{  0, \ldots, 2^{\ell-1} M - \sum_{q=1}^{\ell - 1} 2^{\ell - q } k_q - \sum_{q=1}^{\ell - 1} 2^{\ell-1 - q} (m-q+1) - (m-\ell+1) \} ,
\]
is the number of times that we are in case $ (ii) $ until the first entry drops from $ m - \ell + 1 $ to $ m -\ell $ in case $ (i) $. 
The number of blowups up to this point is $ k_1 + \ldots + k_s + s $. 

If $ s = m -1 $, the first entry of the bound for $\inv(.) $ is $ 1 $. In particular, we can only be in case $ (ii) $ for the remaining decreases if we want to determine the maximal length of a path in the blowup tree. 
Hence, the second entry for the bound determines the number of blowups remaining. 
In total, we obtain the bound
\[
	\sum_{\ell=1}^{m-1} k_\ell + (m-1) +
	2^{m-1} M - \sum_{\ell = 1}^{m-1} 2^{m-\ell} k_\ell - \sum_{\ell=1}^{m-1} 2^{m-\ell-1} (m - \ell + 1)
	=
\]
\[
   =
   2^{m-1} M + m - 1 
	- \sum_{\ell = 1}^{m-1} (2^{m-\ell}-1) k_\ell- \sum_{\ell=1}^{m-1} 2^{m-\ell-1} (m - \ell + 1).
\]
We aim to maximize the bound for the number of blowups.
The only variation in the expression are the numbers $ k_1, \ldots, k_{m-1} $. 
Since $ 2^{m-\ell}-1 \geq 1 $
the maximum is obtained if $ k_\ell = 0 $, for all $ \ell \in \{ 1, \ldots, m- 1 \} $.
In conclusion, we have proven the bound of \eqref{eq:bounds_tree} for $ \mbox{mode} = 1 $.

\smallskip 

Let us consider the remaining three cases ($ \mbox{mode}\in \{ 2,3,4\} $).
For each of them, we used
$
	\iota(g) = \left( \alpha(g), \mathfrak{a}(g), \beta(g), \mathfrak{b}(g)\right)
	 \in \mathbb{Z}_{\ge 0}^4
$
of Definition~\ref{Def:Invariant_codim_2}
to measure the improvement of the singularity after the blowup following the respective strategy.
We have $ \iota(g) = ( \alpha,\fa,\beta,\fb) $. 
Note that $ \fa + \fb \leq n $. 
Let us fix the value of $ \mbox{mode}\in \{ 2,3,4\} $.

Let $ (\alpha',\fa',\beta',\fb') $ be the value of $ \iota(g') $ after the blowup in the center,
which is determined by the strategy given by $ \mbox{mode} $.
Let $ j \in \{ 1, \ldots, n \} $ be such that we are in the $ X_j $-chart.
By
Propositions~\ref{Prop:codim2_terminates},~\ref{Prop:singlocus_terminates}, and Corollary~\ref{Cor:except_singlocus_terminates},
we have $ (\alpha',\fa',\beta',\fb') <_\lex (\alpha,\fa,\beta,\fb) $.
The decrease can be made more precise depending on which chart we are.  
	Since $ g = \ux^A - \cc \ux^B $ and $ A_i B_i = 0 $ for all $ i \in \{ 1, \ldots, n \} $, 
	we either have $ A_j \neq 0 $ (case $ (I) $) or $ B_j \neq 0 $ (case $ (II) $).
	In these two cases, the proofs of 
	Propositions~\ref{Prop:codim2_terminates},~\ref{Prop:singlocus_terminates}, and Corollary~\ref{Cor:except_singlocus_terminates} provide that we have:
\begin{enumerate}
	\item[$(I)$] 
	$ 
	\left\{ 
	\begin{array}{lll} (\alpha',\fa',\beta',\fb') 
	=
    (\alpha,\fa-1,\beta, \fb),  
    & &
    \text{if } \fa > 1; 
    \\
    (\alpha',\fa',\beta',\fb') 
    =
    (\alpha',\fa',\beta, \fb) 
    ,
    &
    \mbox{with } \alpha' < \alpha,
    & 
    \text{if } \fa = 1;    
    \end{array} 
    \right. 
    $
    
    \medskip 
    
	\item[$(II)$] 
	$ 
	\left\{ 
	\begin{array}{lll} (\alpha',\fa',\beta',\fb') 
	= 
    (\alpha,\fa,\beta, \fb-1),  
    &&
    \text{if } \fb > 1 
    \\
    (\alpha',\fa',\beta',\fb') 
    =
    (\alpha,\fa,\beta',\fb') 
 	, 
    & 
    \mbox{with } \beta' < \beta,
    &
    \text{if } \fb = 1    .
    \end{array} 
    \right. 
    $
\end{enumerate}
Note that $ \fa' \leq n - \fb  $ in $ (I) $ and $ \fb' \leq n - \fa  $ in $ (II) $.
Since in $ (I) $ (resp.~$(II)$), the last two (resp.~first two) entries remain the same,
the operations determined by $(I)$ and $(II)$ are independent of each other. 

Due to the bound of $ \fa' $ and $ \fb' $,
we get the longest path in the resolution tree
if both the value of $ \fa(.) $ and $ \fb(.) $ decrease to the value $ 1 $ first. 
This is achieved after $ \fa + \fb - 2 $ blowups following the respective monomialization procedure. 
We denote the strict transform of $ g $ at this step by $ g'' $,
which is obtained after factoring the monomial part from the total transform of $ g $.
Notice that we have 
$ 
	\iota(g'') = (\alpha, 1, \beta, 1 ). 
$ 
After the next blowup the invariant is at most 
$ (\alpha -1, n -1 , \beta, 1 ) $ (in case $(I)$) 
resp.~$ (\alpha, 1 , \beta-1, n-1 ) $ (in case $(II)$).
If the maximum is attained, 
then we get after $ n -2 $ further blowups the value 
$ (\alpha -1, 1 , \beta, 1 ) $ 
resp.~$ (\alpha, 1 , \beta-1, 1 ) $.
Therefore, after at most $ \fa + \fb  + (\alpha - 2 )(n-1) + (\beta - 2)(n-1) $ blowups,
the value of $ \iota (.) $ becomes $ ( 1, 1, 1 , 1) $.
The latter means that the total transform of $ g $ is of the form $ \ux^C ( x_1 - \rho x_2) $, for some $ C \in \IZ_{\ge 0}^n $ and appropriately chosen variables. 
After one more blowup, the total transform of $ g $ 
fulfills condition~\eqref{eq:ex_loc_mon_cond} and thus 
is locally monomial
and the assertion follows.
\end{proof}

As an immediate consequence of \eqref{eq:bounds_charts_one_bu} and Lemma~\ref{Lem:longest_path}, we obtain:
\begin{Cor}
	Let $ g = \ux^A - \cc \ux^B \in K[x_1, \ldots, x_n] $
	with $ \cc \in K^\times $ and $ A, B \in \IZ_{\ge 0}^n $ 
	such that $ A_i B_i = 0 $ for all $ i \in \{ 1, \ldots, n \} $.
	Using the notation of Lemma~\ref{Lem:longest_path},
	the following are upper bounds for the number of charts in the respective variant for monomializing $ g $:	
	\[ 
	\def\arraystretch{1.5}
	\begin{array}{|l|c|}
	\hline 
	\mbox{mode}  
	&\mbox{upper bound for the number of charts}
	\\
	\hline 
	1 \mbox{ (max.ord.)}
	&
	\displaystyle 
	n^{d(m,M)}
	\\
	&
	\displaystyle 
	\mbox{for }
	d(m,M) :=
	2^{m-1} M + m - 1 
	- \sum_{\ell=1}^{m-1} 2^{m-\ell-1} (m - \ell + 1)
	\\
	\hline 
	2 \mbox{ (codim.2)}
	&
	2^{(\alpha + \beta -4)(n-1) + \fa + \fb + 1}
	\\
	\hline 
	3 \mbox{ (min.codim.)}
	&
	4^{(\alpha + \beta -4)(n-1) + \fa + \fb + 1}
	\\
	\hline 
	4 \mbox{ (exc.)}
	&
	4^{(\alpha + \beta -4)(n-1) + \fa + \fb + 1}
	\\
	\hline 
	\end{array}
	\]	
\end{Cor}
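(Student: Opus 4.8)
The plan is to read the bound off directly from the tree structure of the monomialization process, combining the branching bound in \eqref{eq:bounds_charts_one_bu} with the height bound in Lemma~\ref{Lem:longest_path}. Recall from the preceding remark that the charts form a rooted tree whose root is the initial chart, whose leaves are the final (locally monomial) charts, and in which the children of a non-final chart are exactly the standard charts of the blowup performed in it. Since the blowup in a center $ D_I = V(x_i \mid i \in I) $ is covered by precisely $ \# I $ standard charts $ U_{x_i} $, $ i \in I $, the number of children of a node equals the codimension of the center chosen there.

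First I would bound the branching. By \eqref{eq:bounds_charts_one_bu}, in a fixed mode every admissible center has codimension at most $ c $, where $ c = n $ for $ \mbox{mode} = 1 $, $ c = 2 $ for $ \mbox{mode} = 2 $, and $ c = 4 $ for $ \mbox{mode} \in \{ 3, 4 \} $. Hence every internal node of the tree has at most $ c $ children. Next I would invoke Lemma~\ref{Lem:longest_path} to bound the height: every leaf sits at depth at most $ d $, where $ d $ is the mode-specific expression in \eqref{eq:bounds_tree}, so $ d = d(m,M) $ for $ \mbox{mode} = 1 $ and $ d = (\alpha+\beta-4)(n-1) + \fa + \fb + 1 $ for $ \mbox{mode} \in \{ 2, 3, 4 \} $.

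The remaining ingredient is the elementary counting fact: a rooted tree in which every internal node has at most $ c $ children and every leaf has depth at most $ d $ has at most $ c^d $ leaves. I would prove this by induction on $ d $; for $ d = 0 $ the root is the only node, while for $ d \ge 1 $ the root carries at most $ c $ subtrees, each of whose leaves have depth at most $ d - 1 $, so by induction each subtree contributes at most $ c^{d-1} $ leaves, giving at most $ c \cdot c^{d-1} = c^d $ leaves in total. Applying this with the values of $ c $ and $ d $ from the two previous steps reproduces the table exactly: $ n^{d(m,M)} $ for $ \mbox{mode} = 1 $, $ 2^{(\alpha+\beta-4)(n-1)+\fa+\fb+1} $ for $ \mbox{mode} = 2 $, and $ 4^{(\alpha+\beta-4)(n-1)+\fa+\fb+1} $ for $ \mbox{mode} \in \{ 3, 4 \} $.

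The argument is genuinely immediate once the two inputs are assembled, so I do not anticipate a real obstacle; the only point requiring care is the bookkeeping of what is being counted. The clean bound $ c^d $ counts the \emph{final} charts, i.e.\ the leaves; if one instead wishes to bound the \emph{total} number of charts arising along the process, the same branching-and-height data give the geometric estimate $ \sum_{\ell=0}^{d} c^\ell = (c^{d+1}-1)/(c-1) $, which is of the same order. I would therefore make explicit which of the two counts the statement refers to, since they differ by roughly a factor of $ c $.
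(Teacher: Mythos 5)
Your argument is exactly the one the paper intends: the corollary is stated as an immediate consequence of the branching bound \eqref{eq:bounds_charts_one_bu} and the depth bound of Lemma~\ref{Lem:longest_path}, and your explicit $c^d$ tree-counting induction is the missing (trivial) glue, so the approach is the same. Your closing caveat is also well taken: $c^d$ bounds the number of leaves, while the \emph{total} number of charts is only bounded by $\sum_{\ell=0}^{d} c^{\ell}$, so the table as printed is accurate for final charts but off by roughly a factor of $c/(c-1)$ if read as a bound on all charts --- a point the paper glosses over.
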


Moreover, this worst case number of charts yields to a non-polynomial time algorithm in the number of variables and the degree of the binomial. The running time results as a product of the running time per chart times the number of charts and for each procedure the numer of charts is potentially exponential in $\alpha, \beta, \mathfrak{a}, \mathfrak{b}$ resp. $m,M$, where $\alpha$ and $\beta$ are bounded by the degree of the binomial, $\mathfrak{a}$ and $\mathfrak{b}$ are bounded by the number of variables and $M$ and $m$ are bounded by the degree of $g$, too.

Of course, the upper bounds are quite rough
and the concrete number of blowups can be far smaller for explicit examples.
For example, in the variant, where we choose the centers in the locus of maximal order, the codimension of the center is not necessarily always $ n $ in every blowup.


\medskip

Let us come to the study of explicit examples.
In Figures~\ref{Fig:Ex1} and \ref{Fig:Ex2}, we provide several examples,
where we consider the number of leaves and the number of total charts for each method for choosing the center. 
All examples are computed via an explicit implementation in {\sc Singular} of the algorithms described in the previous sections and the base field is always $ \mathbb{Q} $.
In the following, we discuss patterns, which can be observed in the examples, 
	and we provide some indications for the noticed behavior of the different methods. 
	Nonetheless, we do not provide rigorous proofs for the patterns in general.

\begin{figure} 
\[ 
\small
\def\arraystretch{1.5}
\begin{array}{|l|l|c|c|c|c|}
\hline 
& \mbox{binomial}	
& \mbox{max.ord.}
& \mbox{codim.2}
& \mbox{min.codim}
& \mbox{exc.}
\\
\hline 
1. 
& 
x_1x_2-x_3x_4
&  
4 \,  /  \, 5 
&  
2 \,  /  \, 3 
&  
4 \,  /  \, 5 
& 
4 \,  /  \, 5 
\\
\hline 
2. 
& 
x_1x_2-x_3x_4x_5
&  
10 \,  /  \, 13
&  
3 \,  /  \, 5
&  
10 \,  /  \, 13 
& 
10 \,  /  \, 13
\\
\hline  
3. 
& 
x_1x_2-x_3x_4x_5x_6
&  
22 \,  /  \, 29
&  
4 \,  /  \, 7
&  
22 \,  /  \, 29 
& 
22 \,  /  \, 29
\\
\hline  
4. 
&
x_1x_2x_3-x_4x_5x_6
&  
60 \,  /  \, 79 
&  
6 \,  /  \, 11 
&  
40 \,  /  \, 53 
& 
40 \,  /  \, 53 
\\
\hline 
5. 
& 
x_1x_2x_3-x_4x_5x_6x_7
&  
246 \,  /  \, 325
&  
10 \,  /  \, 19
&  
124 \,  /  \, 165
& 
124 \,  /  \, 165
\\
\hline
6. 
& 
x_1x_2x_3-x_4 \cdots x_8
&  
876 \,  /  \, 1.159
&  
15 \,  /  \, 29
&  
340 \,  /  \, 453
& 
340 \,  /  \, 453
\\
\hline  
7.
&
x_1 \cdots x_4-x_5\cdots x_8
&
1.968 \, / \, 2.601 
&
20 \, / \, 39
&
496 \, / \, 661 
&
496 \, / \, 661 
\\
\hline 
9. 
& 
x_1\cdots x_4-x_5 \cdots x_9
&  
11.376 \,  /  \, 15.041 
&  
35 \,  /  \, 69
&  
1.672 \,  /  \, 2.229
& 
124 \,  /  \, 165
\\
\hline 
10.
&
x_1\cdots x_5-x_6\cdots x_{10}
&
113.760 \, / \,  150.411 
&
 70\, / \, 139 
&
 6.688\, / \, 8.917 
&
6.688 \, / \, 8.917 
\\
\hline 
\hline
11.
&
x_1x_2-x_3^2
&
3 \, / \, 4
&
3 \, / \, 5
&
3 \, / \, 4
&
3 \, / \, 4
\\
\hline

12.
&
x_1x_2x_3-x_4^2
&
7 \, / \, 10
&
7 \, / \, 13
&
7 \, / \, 10
&
7 \, / \, 10 
\\
\hline 
13.
&
x_1x_2x_3x_4-x_5^2
&
15 \, / \, 22
&
15 \, / \, 29
&
15 \, / \, 22
&
15 \, / \, 22
\\
\hline
14.
&
x_1x_2x_3-x_4^4
&
21 \, / \, 33
&
21 \, / \, 41
&
21 \, / \, 31
&
21 \, / \, 38
\\
\hline 
15.
&
x_1x_2x_3x_4-x_5^4
&
85 \, / \, 134
&
85 \, / \, 169
&
85 \, / \, 127
&
85 \, / \, 162
\\
\hline 
16.
&
x_1 \cdots x_5-x_6^4
&
341 \, / \, 538
&
341 \, / \, 681
&
341 \, / \, 511
&
341 \, / \, 666
\\
\hline 
17.
&
x_1\cdots x_6-x_7^4
&
1.365 \, / \, 2.154 
&
1.365 \, / \, 2.729 
&
1.365 \, / \, 2.047
&
1.365 \, / \, 2.698
\\
\hline 
\hline
18.
&
x_1x_2-x_3^3
&
4 \, / \, 6
&
4 \, / \, 7
&
4 \, / \, 6
&
4 \, / \, 6

\\
\hline 

19.
&
x_1x_2x_3-x_4^3
&
13 \, / \, 20
&
13 \, / \, 25
&
19 \, / \, 30
&
13 \, / \, 22
\\
\hline 
20.
&
x_1x_2x_3x_4-x_5^3
&
40 \, / \, 62
&
40 \, / \, 79
&
104 \, / \, 164
&
40 \, / \, 72

\\
\hline
21.
&
x_1x_2x_3-x_4^5
&
31 \, / \, 47
&
31 \, / \, 61
&
43 \, / \, 67
&
31 \, / \, 58
\\
\hline 
22.
&
x_1x_2x_3x_4-x_5^5
&
236 \, / \, 364
&
156 \, / \, 311
&
364 \, / \, 565
&
156 \, / \, 304
\\
\hline 
23.
&
x_1\cdots x_5-x_6^5
&
1.181 \, / \, 1.822
&
781 \, / \, 1.561
&
3.381 \, / \, 5.223
&
781 \, / \, 1.546
\\
\hline 
24.
&
x_1\cdots x_6-x_7^5
&
5.906 \, / \, 9.112 
&
3.906 \, / \, 7.811 
&
32.782 \, / \, 50.521
&
3.906 \, / \, 7.780
\\
\hline

\end{array}
\]
\caption{List of examples.
	In the last four columns, the entries are 
	``the number of leaves/total number of charts".}
\label{Fig:Ex1} 
\end{figure}

\begin{example}[Figure~\ref{Fig:Ex1}, Examples 1--10]
	In the first block of examples, all exponents of the starting binomial are one. 
	This has the effect that 
	the variant choosing centers of codimension two has a strong advantage, as can be easily seen in the number of charts. 
	The reason for this is that the codimension of the centers in the other methods is very large, 
	but the effect of the blowup is not much different than with a center of codimension two.
	
	Let us illustrate this for $ g = x_1 x_2 x_3 - x_4 x_5 x_6 $.
	The codimension two center, which will choose is $ V (x_1, x_4 )$.
	In the $ X_1 $-chart of the blowup, the strict transform of $ g $ is 
	$ g' = x_2 x_3 - x_4 x_5 x_6 $.
	(Here, we abuse notation and denote the variables in the chart of the blowup also by $ x_1, \ldots, x_6 $.)
	In the $ X_4 $-chart, we obtain the strict transform $ x_1 x_2 x_3 - x_5 x_6 $. 
	
	On the other hand, the variant choosing a center in the locus of maximal order determines the origin $ V (x_1, \ldots, x_6 ) $ 
	as the unique center. 
	In the $ X_1 $-chart of the corresponding blowup, the strict transform of $ g $ is 
	$ g' = x_2 x_3 - x_4 x_5 x_6 $.
	This is the same as the one before.
	In the remaining $ 5 $ charts, the strict transforms are of the same (up to renaming the variables).
	In contrast to the codimension two center, we have $ 6 $ different charts instead of only $ 2 $.
	
	For the other two methods, the center is $ V (x_1, x_2, x_4, x_5 ) $ and the analogous behavior appears, as the reader may verify.  
\end{example}

\begin{example}[Figure~\ref{Fig:Ex1}, Examples 11--17]
	\label{Ex:Block2} 
	In the second block of examples, all binomials have a term $ x_n^{2k} $ appearing and the other monomial is of the form as in Examples 1--10.
	The number of leaves is in all cases the same,  the total number of charts varies.
	The variant which seems to be most efficient is the one, where we choose the centers in the singular locus with minimal codimension.
	Note that the centers in this case are all of codimension three. 
	
	If $ k = 1 $, the maximal order is two. 
	Therefore, the centers for $ \mbox{mode} \in \{ 1, 3, 4 \} $ coincide. 
	Since the codimension of the center is three in these cases, 
	none of the variables remaining in the strict transforms are exceptional.
	Moreover, already for $ x_1 x_2 - x_3^2 $ one can verify that the codimension two centers requires more blowups to monomialize than the other methods.
	In the latter, the blowup with center $ V(x_1, x_2, x_3 ) $ is sufficient.
	
	This changes slightly for $ k = 2 $ and the differences in the number of total charts increases.
	The variant for $ \mbox{mode} = 3 $ remains the most efficient one.
	For example, the maximal order locus of $ x_1 x_2 x_3 - x_4^4 = 0  $ is $ V (x_1, x_2, x_3, x_4) $,
	while $ V(x_1, x_2, x_4 ) $ is the chosen center of the minimal codimension contained in the singular locus. 
\end{example}

\begin{example}[Figure~\ref{Fig:Ex1}, Examples 18--24]
	The last block of examples in Figure~\ref{Fig:Ex1} is of the same form as the previous one, but we have $ x_n^{2k+1} $ as a monomial.
	As one can see in the numbers, the behavior changes, except for Example 18.
	
	If the maximal order of the binomial is $ \leq 3 $, 
	then the variant, which chooses the center in the locus of maximal order, is the best choice. 
	One can verify that in this method, the behavior is the same as in the previous block.  
	
	One of the reasons for the large numbers for $\mbox{mode}=3$ 
	(centers of minimal codimension contained in the singular locus)
	is that the codimension is three at the beginning. 
	For example, if $ k = 1 $, 
	the strict transform of the binomial in the $ X_n $-chart of the first blowup is $ x_1 \cdots x_{n-1} - x_n $.
	This is not transversal to the exceptional divisor, which is given by $ x_n = 0 $ in this chart. 
	Therefore $ n - 1 $ more blowups are necessary in this chart.
	Moreover, in the $ X_1 $-chart, we obtain $ x_2 \cdots x_{n-1} - x_1 x_n^3 $ as strict transform, so $ x_1 $ just switch the side in the binomial.
	In contrast to this, the variable $ x_1 $ disappears in the $ X_1 $-chart of the blowup in $ V(x_1,x_2,x_3, x_n) $ 
	(which is the center for $ \mbox{mode}=1 $). 
	
	The phenomenon that a variable is switching sides also appears 
	for the other two methods ($\mbox{mode}\in \{ 2, 4 \}$), but it has less impact for the codimension two centers as we are creating less charts, where the mentioned blowups arise. 
	On the other hand, for examples of this kind of larger maximal order, 
	the codimension 2 center become more efficient
	and the last variant ($\mbox{mode}=4$) is slightly better.
	The reasons for the latter are the same as in Example~\ref{Ex:Block2}.
\end{example}

\begin{figure} 
\[ 
\small 
\def\arraystretch{1.5}
\begin{array}{|l|l|c|c|c|c|}
\hline 
& \mbox{binomial}	
& \mbox{max.ord.}
& \mbox{codim.2}
& \mbox{min.codim}
& \mbox{exc.}
\\
\hline 
25.
&
x_1^2-x_2^3x_3^4
&
6 \, / \, 11
&
6 \, / \, 11
&
6 \, / \, 11
&
6 \, / \, 11
\\
\hline 
26.
&
x_1^2-x_2^3x_3^4x_4^5
 &
12 \, / \, 22
&
12 \, / \, 23
&
12 \, / \, 22
&
12 \, / \, 23
\\
\hline 
27.
&
x_1^2-x_2^3x_3^4x_4^5x_5^6
 &
18 \, / \, 34
&
15 \, / \, 29
&
15 \, / \, 28
&
15 \, / \, 29
\\
\hline 
28. 
&
x_1^2-x_2^3x_3^4x_4^5x_5^6\cdots x_{11}^{12}
 &
288 \, / \, 560 
&
98 \, / \, 195 
&
98 \, / \, 180
&
98 \, / \, 195
\\
\hline
29. 
&
x_1^2-x_2^3x_3^4x_4^5x_5^6\cdots x_{16}^{17}
 &
2.304 \, / \, 4.480 
&
582 \, / \, 1.163 
&
582 \, / \, 1.036 
&
582 \, / \, 1.163
\\
\hline
\hline

30. 
&
x_1x_2-x_3^3x_4^4
&  
8 \,  /  \, 12 
&  
8 \,  /  \, 15 
&  
8 \,  /  \, 12 
& 
8 \,  /  \, 13 
\\
\hline 

31.
&
x_1x_2-x_3^3x_4^4x_5^5
&
16 \, / \, 24
&
13 \, / \, 25
&
16 \, / \, 24
&
13 \, / \, 22
\\
\hline 
 
32.
&
x_1x_2-x_3^3x_4^4x_5^5x_6^6
&
28 \, / \, 42
&
19 \, / \, 37
&
22 \, / \, 33
&
19 \, / \, 33
\\
\hline 
33.
&
x_1x_2-x_3^3x_4^4x_5^5x_6^6\cdots x_{12}^{12}
&
512 \, / \, 768	
&
76 \, / \, 151 
&
132 \, / \, 198 
&
76 \, / \, 141  
\\
\hline
34.
&
x_1x_2-x_3^3x_4^4x_5^5x_6^6\cdots x_{17}^{17}
&
4.096 \, / \, 6.144	
&
151 \, / \,  301 
&
652 \, / \,  978 
&
151 \, / \, 286  
\\
\hline
\hline 
35.
&
x_1^2-x_2^2x_3
&
2 \, / \, 3
&
2 \, / \, 3
&
2 \, / \, 3
&
2 \, / \, 3
\\
\hline 
36.
&
x_1^2-x_2^2x_3^2x_4
&
3 \, / \, 5
&
3 \, / \, 5
&
3 \, / \, 5
&
3 \, / \, 5
\\
\hline

37.
&
x_1^2-x_2^2x_3^2x_4^2x_5^2\cdots x_{11}^2x_{12}
&
11 \, / \, 21
&
11 \, / \, 21
&
11 \, / \, 21
&
11 \, / \, 21
\\
\hline
38.
&
x_1^2-x_2^2x_3^2x_4^2x_5^2\cdots x_{16}^2x_{17}
&
16 \, / \, 31
&
16 \, / \, 31
&
16 \, / \, 31
&
16 \, / \, 31
\\
\hline
\hline 
39.
&
x_1x_2^2-x_3x_4^2
&
6 \, / \, 9
&
2 \, / \, 3
&
2 \, / \, 3
&
2 \, / \, 3
\\
\hline

40.
&
x_1^2 x_2^3 - x_4 x_5^2 x_6^2
&
146 \, / \, 264
&
24 \, / \, 47
&
22 \, / \, 42
&
24 \, / \, 47
\\
\hline

41.
&
x_1x_2^2x_3^3-x_4x_5^2x_6^3
&
385 \, / \, 677 
&
26 \, / \, 51
&
16 \, / \, 29
&
16 \, / \, 29
\\
\hline

42.
&
x_1^2 x_2^2 x_3^2 -x_4x_5^2x_6^3
&
1.486 \, / \, 2.677
&
154 \, / \, 307
&
108 \, / \, 191
&
115 \, / \, 218
\\
\hline

43.
&
x_1^2 x_2^3 x_3^3 - x_4 x_5^2 x_6^2 x_7^3
&
18.702 \, / \, 34.262
&
126 \, / \, 251
&
104 \, / \, 196
&
124 \, / \, 246
\\
\hline 
44.
&
x_1x_2^2x_3^3x_4^4-x_5x_6^2x_7^3x_8^4
&
107.062 \, / \, 196.798 
&
260 \, / \, 519 
&
206 \, / \, 371
&
213 \, / \, 392
\\
\hline
\end{array}
\]
\caption{List of examples (continued).}
\label{Fig:Ex2} 
\end{figure}

\begin{example}[Figure~\ref{Fig:Ex2}, Examples 25--29]
	\label{Ex:Block?}
	Let us turn to examples,
	where the appearing exponents are larger.
	More precisely, we consider binomials of the form 
	$ x_1^2 - x_2^3 x_3^4 \cdots x_n^{n+1} $, for $ n \in \IZ_{\ge 3} $. 
	The large exponents have the effect that the centers are of codimension two at the beginning of all variants.
	Due to the exceptional divisors created there, 
	the centers for $ \mbox{mode} = 4 $ coincide with the centers in the codimension two variant ($\mbox{mode}=2$). 
	 
	For centers in the locus of maximal order and centers of minimal codimension ($\mbox{mode}\in \{ 1,3\} $),
	there appear eventually centers of higher codimension.	
	For example, in $ x_1^2 - x_2^3 \cdots x_5^6 $, we obtain after five blowups with centers of codimension two the strict transform
	$ x_1^2 - x_2 x_4 x_5^6 $ and $ ( x_2, x_3, x_4 ) $ are exceptional. 
	Our variant for choosing centers in the locus of maximal order, determines $ V ( x_1, x_2, x_4)$ as the next center,
	while, for $ \mbox{mode}=3 $, the next center is $ V (x_1, x_5) $.
	This leads to more charts in the first variant.
	Analogous to Example~\ref{Ex:Block2}, the centers of minimal codimension are slightly better in the number of total charts than the centers of codimension two, 
	but the number of leaves are the same. 
\end{example}

\begin{example}[Figure~\ref{Fig:Ex2}, Examples 30--34]
	In contrast to the previous block of examples,
	the difference between centers of codimension two and centers of higher codimension becomes more clear for binomials of the type	
	$ x_1 x_2 - x_3^3 x_4^4 \cdots x_n^{n} $, for $ n \in \IZ_{\ge 4} $. 
	As a consequence the variants with $ \mbox{mode} \in \{ 2, 4\} $ are more efficient for large $ n \gg 4 $,
	while $ \mbox{mode} =  4 $ is slightly better as the center of codimension three at the beginning provide a fast improvement.  
	On the other hand, the number of charts are larger for the remaining two variants,
	where the reason for the large numbers if $ \mbox{mode}=1 $ are the same as in Example~\ref{Ex:Block?}.  
\end{example}

\begin{example}[Figure~\ref{Fig:Ex2}, Examples 35--38]
	There are also types of binomials, for which all variants choose the same centers.
	For example, for binomials of the form 
	$ x_1^2-x_2^2\cdots x_{n-1}^2 x_n $, for some $ n \in \IZ_{\ge 3} $, 
	all centers in the monomialization procedures are of codimension two. 
	If we blowup with center $ V(x_1, x_2) $,
	the total transform of the binomial in the respective charts are 
	$ x_1^2 (1 - x_2^2\cdots x_{n-1}^2 x_n ) $
	and $ x_2^2 ( x_1^2  - x_3^2\cdots x_{n-1}^2 x_n  ) $.
	While the first chart is locally monomial, the second one is of the same form as the original binomial
	with the difference that $ x_2 $ does not appear anymore. 
	Hence, the total number of charts is $ 2 ( n-1) - 1 $ and the number of leaves is $ n- 1 $. 
\end{example}

\begin{example}[Figure~\ref{Fig:Ex2}, Examples 39--44]
	This block of examples consists of homogeneous polynomials. 
	Hence, the variant choosing the centers in the locus of maximal order 
	will first blow up the closed point, which creates many charts. 
	For increasing degree of the homogeneous binomial,
	we obtain a fast growing number of charts and leaves. 
	The other three variants are more efficient,
	where all of them choose first centers of codimension two.
	In contrast to $ \mbox{mode}= 1 $, the centers of larger codimension are more efficient than the codimension two centers towards the end of the monomialization procedure, where the appearing exponents are at most one.
	This phenomenon has already been observed in Example~\ref{Ex:Block2}. 
\end{example}

\begin{figure} 
\[ 
\small 
\def\arraystretch{1.5}
\begin{array}{|l|l|c|c|c|c|}
\hline 
& \mbox{binomial}	
& \mbox{max.ord.}
& \mbox{codim.2}
& \mbox{min.codim}
& \mbox{exc.}

\\
\hline
27.
&
x_1^2-x_2^3x_3^4x_4^5x_5^6
&
18 \, / \, 34
&
15 \, / \, 29
&
15 \, / \, 28
&
15 \, / \, 29
\\
\hline 
27.2
&
x_1^2-x_2^5x_3^3x_4^6x_5^4
 &
20 \, / \, 38
&
15 \, / \, 29
&
15 \, / \, 28
&
15 \, / \, 29
\\
\hline 
27.3
&
x_1^2-x_2^6x_3^5x_4^4x_5^3
 &
15 \, / \, 28
&
15 \, / \, 29
&
15 \, / \, 28
&
15 \, / \, 29
\\
\hline
\hline 
28. 
&
x_1^2-x_2^3x_3^4x_4^5x_5^6\cdots x_{11}^{12}
&
288 \, / \, 560 
&
98 \, / \, 195 
&
98 \, / \, 180
&
98 \, / \, 195
\\
\hline 
28.2  
&
x_1^2-x_2^{11}x_3^9\cdots x_6^3 x_7^{12}x_8^{10}\cdots x_{11}^4 
 &
414 \, / \, 812 
&
98 \, / \, 195 
&
98 \, / \, 180
&
98 \, / \, 195
\\
\hline
28.3 
&
x_1^2-x_2^{12}x_3^{11}x_4^{10}x_5^9\cdots x_{11}^3 

 &
141 \, / \, 266 
&
98 \, / \, 195 
&
98 \, / \, 180
&
98 \, / \, 195
\\
\hline

28.4  
&
x_1^2-x_2^{12}x_3^{10}\cdots x_6^4 x_7^{11}x_8^9\cdots x_{11}^3 
 &
114 \, / \, 212 
&
98 \, / \, 195 
&
98 \, / \, 180
&
98 \, / \, 195
\\
\hline 
\hline

31.
&
x_1x_2-x_3^3x_4^4x_5^5
&
16 \, / \, 24
&
13 \, / \, 25
&
16 \, / \, 24
&
13 \, / \, 22
\\
\hline 

31. 2 
&
x_1x_2-x_3^3x_4^5x_5^4
&
20 \, / \, 30
&
13 \, / \, 25
&
16 \, / \, 24
&
13 \, / \, 22
\\
\hline 
\hline 
32.
&
x_1x_2-x_3^3x_4^4x_5^5x_6^6
&
28 \, / \, 42
&
19 \, / \, 37
&
22 \, / \, 33
&
19 \, / \, 33
\\
\hline 
 
32. 2 
&
x_1x_2-x_3^6x_4^5x_5^4x_6^3
&
22 \, / \, 33
&
19 \, / \, 37
&
22 \, / \, 33
&
19 \, / \, 33
\\
\hline 
\hline 
33.
&
x_1x_2-x_3^3x_4^4x_5^5x_6^6\cdots x_{12}^{12}
&
512 \, / \, 768	
&
76 \, / \, 151 
&
132 \, / \, 198 
&
76 \, / \, 141  
\\
\hline

33.2
&
x_1x_2-x_3^{12}x_4^{11}x_5^{10}\cdots x_{12}^3
&
218 \, / \, 327	
&
76 \, / \, 151 
&
132 \, / \, 198 
&
76 \, / \, 141  
\\
\hline
\hline
38.
&
x_1^2-x_2^2x_3^2x_4^2x_5^2\cdots x_{16}^2x_{17}
&
16 \, / \, 31
&
16 \, / \, 31
&
16 \, / \, 31
&
16 \, / \, 31
\\
\hline
38.2
&
x_1^2-x_2x_3^{2}x_4^{2}x_5^{2}\cdots x_{17}^2
&
16 \, / \, 31
&
16 \, / \, 31
&
16 \, / \, 31
&
16 \, / \, 31
\\
\hline
\hline  
41.
&
x_1x_2^2x_3^3-x_4x_5^2x_6^3
&
385 \, / \, 677 
&
26 \, / \, 51
&
16 \, / \, 29
&
16 \, / \, 29
\\
\hline 
41.2
&
x_1^3x_2x_3^2-x_4^3x_5x_6^2
&
244 \, / \, 427 
&
12 \, / \, 23
&
16 \, / \, 29
&
12 \, / \, 23
\\
\hline

41.3
&
x_1^2x_2x_3^3-x_4^2x_5x_6^3
&
274 \, / \, 483 
&
19 \, / \, 37
&
14 \, / \, 25
&
14 \, / \, 25

\\
\hline
\end{array}
\]
\caption{List of examples, where the different choice within one method are considered.}
\label{Fig:Ex3}
\end{figure}

\begin{example}[Figure~\ref{Fig:Ex3}]
	As we have seen, it may appear that we have to make a choice for the center in the respective variant for monomialization.
	Let us have a glimpse into the question, 
	how different choices affect the number of charts.  
	Instead of modifying the implementations, 
	we explore this by interchanging the appearing exponents appropriately in a given example. 
	
	If $ \mbox{mode} \in \{ 2,3,4\} $, 
	the numbers do not change, except for the block of example~41 in Figure~\ref{Fig:Ex3}.
	In~41, i.e.  $ x_1 x_2^2 x_3^3 - x_4 x_5^2 x_6^3 $ 
	the numbers are larger compared to the other choices 41.2 and 41.3.
	The reason for this is that the codimension two centers 
	($ \mbox{mode}\in \{ 2,4\} $) for example 41
	are of the form $ V (x_1, x_i) $ or $ V (x_j, x_4) $
	(for $ i \in \{ 4,5,6 \} $ and $ j \in \{ 1, 2, 3 \} $) 
	at the beginning of the monomialization process.   
	Hence, the improvement of the exponent of $ x_i $, resp.~$ x_j $, is only by one and more blowups are needed.
	On the other hand, $ \mbox{mode}= 3 $ is less affected by this,
	for example, the first center for example 41, $ x_1 x_2^2 x_3^3 - x_4 x_5^2 x_6^3 $, is $ V (x_2,x_5) $.
	Example 41.3 is slightly better if $ \mbox{mode}=3 $, 
	as the first appearing powers are even.
	
	The first method (via centers contained in the locus of maximal order) varies more if we interchange the exponents.
	In the cases, where the maximal order is two,
	the number of charts is significantly larger
	if we the first exponents are odd.
	The reason for this can be seen in Example~\ref{Ex:Block?},
	where the binomial became $ x_1^2 - x_2 x_4 x_5^6 $. 
	The next center following our way of choosing the center (for $ \mbox{mode}=1 $),
	would be $ V(x_1,x_2,x_4) $.
	Hence, we create three new charts and in two of them, we have to blow up $ V(x_1, x_5 ) $ three times.
	In contrast to this, if we blowup first in $ V(x_1, x_5) $,
	we would get a smaller number of charts, 
	since we only have to blow up $ V(x_1,x_2,x_4) $ at the end, when we reach $ x_1^2 - x_2 x_4 $ as strict transform.

	Note that we interchanged the exponents only at the beginning of the monomialization process.
	In principle, one could interchange them after each blowup in order to optimize the choice of the center,
	but we do not address this here. 
\end{example}

\smallskip

In conclusion,
the approach by blowing up centers contained in the locus of maximal order provides a significant larger number of charts
than the other variants if the exponents appearing in the binomial increase. 
Most of the time, 
choosing only centers of codimension two ($ \mbox{mode}=2 $) leads to a small number of charts,
while a particular structure of the binomial may
give a small advantage to the other variants ($ \mbox{mode} \in \{ 3 , 4\} $) in some cases.
But since the advantage is only small, 
our choice for a first investigation of a local monomialization of a binomial and the data resulting from it 
is via centers of codimension two.

\smallskip

%
%

\end{document}